\numberwithin{equation}{section}
\newtheorem{theorem}{Theorem}[section]
\newtheorem{assumption}[theorem]{Assumption}
\newtheorem{lemma}[theorem]{Lemma}
\newtheorem{proposition}[theorem]{Proposition}
\newtheorem{remark}[theorem]{Remark}
\newenvironment{proof}[1][Proof]{\textbf{#1.} }{\ \rule{0.5em}{0.5em}}
\DeclareMathOperator*{\esssup}{ess\,sup}
\DeclareMathOperator*{\essinf}{ess\,inf}
\DeclareMathOperator*{\Var}{\mbox{Var}}
\begin{document}

\title{{Risk-sensitive Dynkin games with heterogeneous Poisson random intervention times}
\thanks{We thank David Hobson for the suggestion of considering heterogenous signal times for constrained Dynkin games, which motivates the current project.}}

\author{\small\textsc{{Gechun Liang,\ \ Haodong Sun}}}
\affil{\small{\textsc{Department of Statistics, University of Warwick, Coventry, CV4 7AL, U.K.}}\\

\texttt{g.liang@warwick.ac.uk},\ \ \texttt{h.sun.9@warwick.ac.uk}}

\date{}
\maketitle

\begin{abstract} The paper solves constrained Dynkin games with
risk-sensitive criteria, where two players are allowed
to stop at two independent Poisson random intervention times, via
the theory of backward stochastic differential equations. This
generalizes the previous work of [Liang and Sun, Dynkin games with
Poisson random intervention times, SIAM Journal on Control and
Optimization, 2019] from the risk-neutral criteria and common
signal times for both players to the risk-sensitive
criteria and two heterogenous signal times. Furthermore, the
paper establishes a connection of such constrained risk-sensitive Dynkin games with
a class of stochastic differential games via Krylov's randomized
stopping technique.\\

\noindent\textit{Keywords}: Dynkin games, \and heterogenous Poisson
signal times,
\and backward stochastic differential equations, \and stochastic differential games, \and randomized stopping\\

\noindent\textit{Mathematics Subject Classification (2010)}: 91A15,
\and 91A55, \and 60H30.
\end{abstract}


\section{Introduction}
Risk-sensitive criteria constitute a genuinely interesting class of
performance criteria in optimization problems, in which the linear
expectation $\mathbb{E}[\cdot]$ is replaced by the nonlinear
expectation
$$\tilde{\mathbbm{E}}\left[\cdot\right]:=g^{-1}(\mathbbm{E}\left[g(\cdot)\right]),$$
for some strictly increasing function $g$ as a risk-sensitive
function. The corresponding risk-sensitive control has been
developed to reflect an optimizer's attitudes to risks. In
particular, the risk-sensitive function $g$ is chosen to model the optimizer's
attitudes towards risks (e.g. strict concavity of $g$ reflects
risk-aversion {of maximization players or risk-seeking of minimization players}).

In this paper, we are interested in \emph{Dynkin games with risk-sensitive criteria},
by taking into account of both players' attitudes to risks. Namely, the two players aim
to minimize/maximize some payoff functional $R(\sigma,\tau)$ under the nonlinear expectation $\tilde{\mathbbm{E}}[\cdot]$:
\[J(\sigma,\tau)=\tilde{\mathbbm{E}}[R\left(\sigma,\tau\right)]=g^{-1}\left(\mathbbm{E}\left[g\left(R\left(\sigma,\tau\right)\right)\right]\right),\]
where $\sigma$ and $\tau$ are the stopping times to be chosen by the respective minimization/maximization players.
It is called risk-sensitive because
\[J(\sigma,\tau) \approx \mathbbm{E}[R(\sigma,\tau)]-\frac{1}{2}l_g\left(\mathbbm{E}[R(\sigma,\tau)]\right)\Var[R(\sigma,\tau)],\]
where $l_g(x)=-\frac{g''(x)}{g'(x)}$ is the Arrow-Pratt function of absolute risk aversion. The case $g(x)=x$ corresponds to a risk-neutral attitude of both players since $l_g(x)=0$. For the case of an exponential utility $g(x)=-e^{-\gamma x}$ with $\gamma>0$, $l_g(x)=\gamma$ is constant and the risk-sensitivity is only expressed through the risk-sensitivity parameter $\gamma$.

The stopping time strategies of the two players are restricted to two independent sequences of Poisson arrival times as
the exogenous constraints on the players' abilities to stop. The constraints may represent liquidity effects, indicating the times at which the underlying stochastic processes are available to stop. Applications of such a liquidity model can be found in
\cite{liang2015MAFE} for bank runs and \cite{liang2018dynkin} for convertible bonds. The constraints can also be seen as information constraints. The players are allowed to make their stopping decisions at all times, but they are only able to observe the underlying stochastic processes at Poisson arrival times. See \cite{dupuis2002optimal} and \cite{lempa2012optimal} for applications to perpetual American options. Due to the introduction of constraints on stopping times and risk-sensitive criteria, we call the Dynkin games considered in this paper the \emph{constrained risk-sensitive Dynkin games}.

We generalize our previous work \cite{liang2018dynkin} on \emph{constrained Dynkin games} in two aspects: First, it takes into consideration of both players' attitudes towards risks via the risk-sensitive function $g$; Second, there are control constraints for both players and, moreover, the constraints are different in the sense that they are allowed to stop at two heterogeneous sequences of Poisson arrival times. Consequently, since the two players' stopping time strategies are chosen from two different sequences of signal times, the usual condition of the upper obstacle $U$ dominating the lower one $L$ is not required.
In \cite{liang2018dynkin}, the risk-sensitive function $g(x)=x$ and both players stop at a single sequence of signal times (so $U\geq L$ is assumed therein).

New challenges arise from the above generalizations. Since the two players stop at two different sequences of Poisson arrival times, the first step to solve the constrained risk-sensitive Dynkin game is merging the two Poisson sequences together while still keeping track of their order. This is crucial when we consider a family of constrained risk-sensitive Dynkin games (\ref{upper_Values_discounted})-(\ref{lower_Values_discounted}) starting from different signal times in order to apply the dynamic programming principle. Note that the starting times of the games (\ref{upper_Values_discounted})-(\ref{lower_Values_discounted}) may not be the respective player's own Poisson signal times; instead they could be from the counterparty's signal times. To deal with the nonlinear expectation $\tilde{\mathbbm{E}}$ arising from the risk-sensitive function $g$, we introduce a new transformation resulting the auxiliary payoff processes (\ref{final_game_value_parameters_L})-(\ref{final_game_value_parameters_xi}), which enable us to rewrite the
payoff functional under the linear expectation $\mathbb{E}$ instead of the nonlinear expectation $\tilde{\mathbbm{E}}$.
For a special case of exponential risk-sensitive function $g$ (see
section \ref{section:exponential}), the representation formula (\ref{value_process}) of the game value
is closely related to Cole-Hopf transformation in the literature of backward stochastic differential equations (BSDEs for short), which is widely used to linearize a class of BSDEs with
quadratic growth (see \cite{Kobylanski}). Our representation formula
(\ref{value_process}) can be regarded as a stochastic control
version of Cole-Hopf transformation.

We also make a connection of constrained risk-sensitive Dynkin games with a class of stochastic differential games via
Krylov's randomized stopping technique (see \cite{Krylov}). It is established in \cite{Krylov} that standard optimal stopping problems (without constraints on stopping times) admit stochastic control representation, which can be further solved via the so-called normalized Bellman equations. The stochastic control representation of the corresponding constrained
optimal stopping problems has been established in \cite{liang2015stochastic} (see section
4 therein). In a constrained stopping game setting as considered in the current paper, it is natural to expect that a stochastic differential game representation should hold accordingly. Indeed, we show that the two players in the stochastic differential game choose their respective running controls and discount rates with binary values $0$ or the Poisson intensity $\lambda^i$, and the optimal control is the Poisson intensity $\lambda^i$ whenever the value of the game falls below the lower obstacle process/goes above the upper obstacle process.

Turing to the literature of Dynkin games,
there has been a considerable development
since the seminal works of Dynkin \cite{dynkin1969game} and Neveu \cite{neveu1975discrete}. The continuous time models were developed,
among others, by Bismut \cite{bismut1977probleme}, Alario-Nazaret et
al \cite{alario1982dynkin}, Lepeltier and Maingueneau
\cite{lepeltier1984jeu} and Morimoto \cite{morimoto1984dynkin}. In
order to relax the dominating condition $U\geq L$ in those papers, Yasuda \cite{yasuda1985randomized}
proposed the strategies of randomized stopping times, and proved that the game
value exists under merely an integrability condition. Rosemberg et
al \cite{rosenberg2001stopping}, Touzi and Vielle
\cite{touzi2002continuous} and Laraki and Solan
\cite{laraki2005value} further extended his work in this direction. The non-Markovian case was addressed in  Cvitanic and Karatzas \cite{cvitanic1996backward} for a fixed horizon and Hamadene et al \cite{HLW1999} for an infinite horizon using the theory of reflected BSDEs.
If the two players in the game are with asymmetric payoffs/information, then it
gives arise to a nonzero-sum Dynkin game. See, for example, Hamadene
and Zhang \cite{hamadene2010}, De Angelis et al
\cite{De Angelis2015} and, more recently, De Angelis and Ekstrom \cite{De Angelis2020} with more references therein. A robust version
of Dynkin games can be found in Bayraktar and Yao
\cite{Bayraktar2017} if the players are ambiguous about their
probability model.

On the other hand, the risk-sensitive optimal stopping problems have been studied by Nagai \cite{nagai2007stopping}, B\"{a}uerle and Rieder \cite{baauerle2017partially}, B\"{a}uerle and Popp \cite{bauerle2018risk} and, more recently, Jelito et al \cite{jelito2019risk}.  For the risk-sensitive zero-sum and nonzero-sum stochastic differential games, we refer to El-Karoui and Hamad{\`e}ne \cite{el2003bsdes}.
To the best of our knowledge, the study of risk-sensitive Dynkin games is still lacking, no matter with or without constraints on stopping time strategies. The current paper offers a first step to understand risk-sensitive Dynkin games, in particular with constraints on the stopping time strategies.

The constrained optimal stopping problems was first studied by Dupuis and Wang \cite{dupuis2002optimal}, where they used it to model perpetual American options exercised at exogenous Poisson arrival times. See also Lempa \cite{lempa2012optimal}, Menaldi and Robin \cite{menaldi2016some} and Hobson and Zeng \cite{HZ} for further extensions of this type of optimal stopping models. From a different perspective, Liang \cite{liang2015stochastic} made a connection between such kind of optimal stopping problems with penalized BSDEs. The corresponding optimal switching (impulse control) models were studied by Liang and Wei \cite{liang2013optimal}, and by Menaldi and Robin \cite{menaldi2017some} \cite{menaldi2018some} with  more general signal times and state spaces. More recently, Liang and Sun \cite{liang2018dynkin} introduced the corresponding constrained Dynkin games (with the risk-sensitive function $g(x)=x$), where both players were allowed to stop at a sequence of random times generated by a single exogenous Poisson process serving as a signal process.

The paper is organized as follows. Section 2 contains the problem formulation and main result, with its proof provided in section 3. In section 4, we establish its connection with a class of stochastic differential games, and in section 5 we further provide two examples. Finally, section 6 concludes the paper.
\section{Constrained risk-sensitive Dynkin games}
Let $(\Omega,\mathcal{F},\mathbbm{P})$ be a complete probability space endowed with a $d$-dimensional standard Brownian motion
$(W_t)_{t\geq 0}$ with $\mathbbm{F}=\{\mathcal{F}_t\}_{t\geq 0}$ being the minimal augmented filtration of $W$.
The probability space also supports two independent sequences of Poisson arrival times $T^{(1)}=\{T^{(1)}_n\}_{n\geq 0}$ and $T^{(2)}=\{T^{(2)}_n\}_{n\geq 0}$ with their respective intensities $\lambda^{(1)}$ and $\lambda^{(2)}$ and the minimal augmented filtration $\mathbbm{H}=\{\mathcal{H}_t\}_{t\geq 0}$, satisfying $T^{(1)}_0=T^{(2)}_0=0$ and $T^{(1)}_{\infty}=T^{(2)}_{\infty}=\infty$. Denote the smallest filtration generated by $\mathbbm{F}$ and $\mathbbm{H}$ as $\mathbbm{G}=\{\mathcal{G}_t\}_{t\geq 0}$, i.e. $\mathcal{G}_t=\mathcal{F}_t\vee\mathcal{H}_t$, and write $\boldsymbol{\lambda}=(\lambda^{(1)},\lambda^{(2)})$.

Let $T$ be a finite $\mathbbm{F}$-stopping time representing the
(random) terminal time of the game. For each player $i\in\{1,2\}$, let us define a random variable $M_i:\Omega\mapsto
\mathbbm{N}$ such that $T_{M_i}$ is the next arrival time in the Poisson sequence $T^{(i)}$ following $T$, i.e. $M_i(\omega):= \sum_{n\geq 1}n\mathbbm{1}_{\{T^i_{n-1}(\omega)\leq T(\omega)<T^i_{n}(\omega)\}}$.

For any integer $n\geq 0$, let us define the control set for each player
$i\in\{1,2\}$ as
\begin{equation}\label{control_set}
\mathcal{R}_n^{(i)}=\{\mathbbm{G}\mbox{-stopping time }\sigma\mbox{
for }\sigma(\omega)=T^{(i)}_N(\omega)\mbox{ where }n\leq N\leq
M_i(\omega)\},
\end{equation}
so the player $i$ chooses from the
Poisson arrival times $T^{(i)}$ with intensity $\lambda^{(i)}$, and
$T_n^{(i)}$ is the smallest stopping time allowed.

Consider a \emph{constrained risk-sensitive Dynkin game}, where the two players choose their respective stopping times $\sigma\in\mathcal{R}_1^{(1)}$ and $\tau\in\mathcal{R}_1^{(2)}$ in order to minimize/maximize the expected cost functional
\begin{equation}
\label{risk_sensitive_cost_functional}
J(\sigma,\tau)=\tilde{\mathbbm{E}}\left[R(\sigma,\tau)\right],
\end{equation}
where the nonlinear expectation $\tilde{\mathbbm{E}}:\mathbbm{R} \to \mathbbm{R}$ is defined via the risk-sensitive function $g$, i.e.
\begin{equation}
\label{def_nonlinear_expectation}
\tilde{\mathbbm{E}}\left[\cdot\right] :=g^{-1}\left(\mathbbm{E}\left[g\left(\cdot\right)\right]\right).
\end{equation}
The discounted payoff functional $R(\sigma,\tau)$ in (\ref{risk_sensitive_cost_functional}) is defined by
\begin{equation}
\label{def_payoff}
R(\sigma,\tau)=\int_0^{\sigma\wedge\tau\wedge
T}e^{-rs}f_s\,ds+e^{-rT}\xi\mathbbm{1}_{\{ \sigma\wedge \tau \geq T\}}+e^{-r\tau}L_{\tau}\mathbbm{1}_{\{\tau<T,\tau\leq\sigma\}}+e^{-r\sigma}U_{\sigma}\mathbbm{1}_{\{\sigma<T,\sigma<
\tau\}},
\end{equation}
where $r>0$ is the discount rate, and $f$, as a real-valued
$\mathbbm{F}$-progressively measurable process, is the running
payoff. The terminal payoff is $U$ if $\sigma$ happens firstly, {$L$
if $\tau$ happens firstly or $\sigma$ and $\tau$ happen
simultaneously}, and $\xi$ otherwise, where $L$ and $U$ are two
real-valued $\mathbbm{F}$-progressively measurable processes, and
$\xi$ is a real-valued $\mathcal{F}_T$-measurable random variable.

Let us define the upper and lower values of the constrained risk-sensitve Dynkin game
\begin{equation}
\label{upper_lowerValues}
\overline{v}^{\boldsymbol{\lambda}}=\inf_{\sigma\in\mathcal{R}_1^{(1)}}\sup_{\tau\in\mathcal{R}_1^{(2)}}J(\sigma,\tau),\mbox{ and }\underline{v}^{\boldsymbol{\lambda}}=\sup_{\tau\in\mathcal{R}_1^{(2)}}\inf_{\sigma\in\mathcal{R}_1^{(1)}}J(\sigma,\tau).
\end{equation}
The game (\ref{upper_lowerValues}) is said to have value $v^{\boldsymbol{\lambda}}$ if $v^{\boldsymbol{\lambda}}=\overline{v}^{\boldsymbol{\lambda}}=\underline{v}^{\boldsymbol{\lambda}}$, and a saddle point $(\sigma^{*},\tau^{*})\in \mathcal{R}_1^{(1)}\times \mathcal{R}_1^{(2)}$ is called an optimal stopping strategy of the game if
\[J(\sigma^{*},\tau) \leq J(\sigma^{*},\tau^{*}) \leq J(\sigma,\tau^{*}),\]
for every $(\sigma,\tau)\in \mathcal{R}_1^{(1)}\times \mathcal{R}_1^{(2)}$.

Compared with the constrained Dynkin game introduced in \cite{liang2018dynkin}, there are two new features of the game (\ref{upper_lowerValues}): First, it takes into consideration of the both players' attitudes towards risks via the risk-sensitive function $g$; Second, there are control constraints for both players and, moreover, the constraints are different in the sense that they are allowed to stop at two heterogeneous sequences of Poisson arrival times. As a consequence, since the two players' stopping time strategies are chosen from two different control sets, the usual dominating condition $U\geq L$ is not required.
In \cite{liang2018dynkin}, the risk-sensitive function $g(x)=x$ and both players stop at a single sequence of Poisson arrival times (so $U\geq L$ is a critical assumption therein).

\subsection{Main result}
To solve the above constrained risk-sensitive Dynkin game, we introduce the characterizing BSDE on a random horizon $[0,T]$:
{\begin{equation}
\label{final_game_value}
\overline{Q}_{t\wedge T}^{\boldsymbol{\lambda}}=\overline{\xi}+ \int_{t\wedge T}^T\left[-\lambda^{(1)} \left(\overline{Q}_s^{\boldsymbol{\lambda}}-\overline{U}_s\right)^+ + \lambda^{(2)}\left(\overline{L}_s-\overline{Q}_s^{\boldsymbol{\lambda}} \right)^+-r\overline{Q}_{s}^{\boldsymbol{\lambda}}\right]\,ds - \int_{t\wedge T}^T\overline{Z}_s^{\boldsymbol{\lambda}}\,dW_s,
\end{equation}}
for $t \geq 0$, where the \emph{auxiliary payoff processes} $\overline{L}$, $\overline{U}$ and $\overline{\xi}$ are given by
\begin{eqnarray}
\overline{L}_t &=& e^{rt}g(e^{-rt}L_t+\int_0^t  e^{-ru}f_u\,du),\label{final_game_value_parameters_L}\\
\overline{U}_t &=& e^{rt}g(e^{-rt}U_t+\int_0^t e^{-ru}f_u\,du),\label{final_game_value_parameters_U}\\
\overline{\xi} &=& e^{rT}g(e^{-rT}\xi+\int_0^T e^{-ru}f_u\,du),\label{final_game_value_parameters_xi}
\end{eqnarray}
respectively. And also we set $\overline{Q}_{t}^{\boldsymbol{\lambda}}\equiv \overline{\xi}$ for $t\geq T$. Moreover, we introduce the following spaces: for any
given $\alpha\in\mathbbm{R}$ and $n\in\mathbbm{N}$,
\begin{itemize}
\item $\mathbbm{L}^{2,n}_{\alpha}: \mathcal{F}_T$-measurable random variables $\xi:\Omega \mapsto \mathbbm{R}^n$ with $\mathbbm{E}\left[e^{2\alpha T}||\xi||^2\right]<\infty$,
\item $\mathbbm{H}^{2,n}_{\alpha}:\mathbbm{F}$-progressively measurable processes $\varphi :[0,T]\times\Omega \mapsto \mathbbm{R}^n$ with $\mathbbm{E}\left[\int_0^Te^{2\alpha s}||\varphi_s||^2\,ds\right]<\infty$,
\item $\mathbbm{S}^{2,n}_{\alpha}:\mathbbm{F}$-progressively measurable processes $\varphi :[0,T]\times\Omega \mapsto \mathbbm{R}^n$ with $\mathbbm{E}\left[\sup_{s\in[0,T]}e^{2\alpha s}||\varphi_s||^2\right]<\infty$,
\end{itemize}
where we denote $\mathbbm{L}^{2,n}_{0}$, $\mathbbm{H}^{2,n}_{0}$ and
$\mathbbm{S}^{2,n}_{0}$ by $\mathbbm{L}^{2,n}$, $\mathbbm{H}^{2,n}$
and $\mathbbm{S}^{2,n}$ for the ease of notation.

We impose the following assumptions on the risk-sensitive function
$g$, the running payoff $f$ and the terminal payoffs $L$, $U$ and
$\xi$ in terms of the auxiliary payoffs $\overline{L}$, $\overline{U}$ and
$\overline{\xi}$.

\begin{assumption}
\label{assumption_1}
The deterministic risk-sensitive function $g:\mathbbm{R}\to\mathbbm{R}$ is strictly increasing and, moreover,
(i) when $T$ is an unbounded stopping time, $\overline{L}$, $\overline{U}$ and $\overline{\xi}$ are all bounded; (ii) when $T$ is a bounded stopping time, $\overline{L}\in\mathbbm{S}^{2,1}$, $\overline{U}\in\mathbbm{S}^{2,1}$ and $\overline{\xi}\in\mathbbm{L}^{2,1}$, where $\overline{L}$, $\overline{U}$ and $\overline{\xi}$ are given by (\ref{final_game_value_parameters_L}), (\ref{final_game_value_parameters_U}) and (\ref{final_game_value_parameters_xi}), respectively.
\end{assumption}

On one hand, since the two players' control sets are different, the usual dominating condition $U\geq L$ is not required. On the other hand,
the conditions (i) and (ii) in Assumption \ref{assumption_1} guarantee the existence and uniqueness of the solution to BSDE
(\ref{final_game_value}), which will in turn be used to construct the game value and its associated optimal stopping strategy. Under Assumption \ref{assumption_1}, the solvability of BSDE (\ref{final_game_value}) follows from Theorem 3.3 in \cite{briand1998stability} (when $T$ is unbounded) and Theorem 4.1 in \cite{pardoux1990adapted} (when $T$ is bounded), and thus we omit the proof of the following proposition and refer to \cite{briand1998stability} and \cite{pardoux1990adapted} for the details.

\begin{proposition}
\label{prop_bsde_solvability}
Suppose that Assumption \ref{assumption_1} holds. Then, there exists a unique solution $(\overline{Q}^{\boldsymbol{\lambda}},\overline{Z}^{\boldsymbol{\lambda}})$ to BSDE (\ref{final_game_value}). Moreover, (i) when $T$ is an unbounded stopping time, $\overline{Q}^{\boldsymbol{\lambda}}$ is continuous and bounded, and $\overline{Z}^{\boldsymbol{\lambda}}$ belongs to $\mathbbm{H}^{2,d}_{-r}$; (ii) when $T$ is a bounded stopping time, the solution pair $(\overline{Q}^{\boldsymbol{\lambda}},\overline{Z}^{\boldsymbol{\lambda}})$ belong to $\mathbbm{S}^{2,1} \times\mathbbm{H}^{2,d}$.
\end{proposition}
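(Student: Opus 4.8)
The plan is to read (\ref{final_game_value}) as a scalar BSDE driven by the Brownian motion $W$ alone, with driver
$$F(s,y):=-\lambda^{(1)}\big(y-\overline{U}_s\big)^+ + \lambda^{(2)}\big(\overline{L}_s-y\big)^+ - ry,$$
on the $\mathbbm{F}$-stopping-time horizon $[0,T]$ and with terminal datum $\overline{\xi}\in\mathcal{F}_T$; note that the Poisson data enter only through the constants $\lambda^{(1)},\lambda^{(2)}$, so no jump terms appear. The first step is to record two structural facts about $F$. First, $y\mapsto F(s,y)$ is globally Lipschitz with constant $\lambda^{(1)}+\lambda^{(2)}+r$, since $y\mapsto(y-\overline{U}_s)^+$ and $y\mapsto(\overline{L}_s-y)^+$ are each $1$-Lipschitz, and $F$ does not depend on the martingale variable, so its $z$-Lipschitz constant is $0$. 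Second, $F$ is strictly dissipative in $y$, namely $\big(y-y'\big)\big(F(s,y)-F(s,y')\big)\le -r\,|y-y'|^2$, because $y\mapsto(y-\overline{U}_s)^+$ is nondecreasing, $y\mapsto(\overline{L}_s-y)^+$ is nonincreasing, and the linear term contributes exactly $-r|y-y'|^2$. Thus the discount rate $r>0$ plays the role of a strict monotonicity constant.

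The second step is to check the integrability of the free term $F(s,0)=-\lambda^{(1)}(\overline{U}_s)^- + \lambda^{(2)}(\overline{L}_s)^+$ and of $\overline{\xi}$. Under Assumption \ref{assumption_1}(i), $|F(\cdot,0)|\le\lambda^{(1)}\|\overline{U}\|_\infty+\lambda^{(2)}\|\overline{L}\|_\infty$ and $\overline{\xi}$ is bounded; under Assumption \ref{assumption_1}(ii), $\overline{L},\overline{U}\in\mathbbm{S}^{2,1}\subset\mathbbm{H}^{2,1}$ (the inclusion holds because $T$ is bounded), hence $F(\cdot,0)\in\mathbbm{H}^{2,1}$ and $\overline{\xi}\in\mathbbm{L}^{2,1}$. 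In the bounded-horizon case this is exactly the setting of the classical Lipschitz BSDE theory, and Theorem 4.1 of \cite{pardoux1990adapted} yields a unique $(\overline{Q}^{\boldsymbol{\lambda}},\overline{Z}^{\boldsymbol{\lambda}})\in\mathbbm{S}^{2,1}\times\mathbbm{H}^{2,d}$, the continuous modification of $\overline{Q}^{\boldsymbol{\lambda}}$ coming from the continuity of $\int_\cdot^T\overline{Z}^{\boldsymbol{\lambda}}_s\,dW_s$, the absolute continuity of the drift, and $\overline{Q}^{\boldsymbol{\lambda}}_T=\overline{\xi}$. In the unbounded-horizon case, the two structural facts above together with the boundedness of $\overline{\xi}$ and $F(\cdot,0)$ place us in the framework of BSDEs with random (possibly infinite) terminal time under a monotonicity condition; since the $z$-Lipschitz constant is $0$ and the monotonicity constant $r$ is strictly positive, the structural inequality needed for solvability on an infinite horizon holds automatically, and Theorem 3.3 of \cite{briand1998stability} gives a unique continuous $\overline{Q}^{\boldsymbol{\lambda}}$ with $\overline{Z}^{\boldsymbol{\lambda}}$ square-integrable against the weight $e^{-2rs}$, i.e.\ $\overline{Z}^{\boldsymbol{\lambda}}\in\mathbbm{H}^{2,d}_{-r}$. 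The boundedness of $\overline{Q}^{\boldsymbol{\lambda}}$ is then obtained by comparison against the constant sub/supersolutions $\mp c$, where $c:=\max\{\|\overline{\xi}\|_\infty,\|\overline{L}\|_\infty,\|\overline{U}\|_\infty\}$: one checks $F(s,c)\le 0\le F(s,-c)$ and $-c\le\overline{\xi}\le c$, which forces $\|\overline{Q}^{\boldsymbol{\lambda}}\|_\infty\le c$; the membership $\overline{Z}^{\boldsymbol{\lambda}}\in\mathbbm{H}^{2,d}_{-r}$ can alternatively be read off directly by applying It\^o's formula to $e^{-2rs}|\overline{Q}^{\boldsymbol{\lambda}}_s|^2$ on $[0,T\wedge n]$, using the dissipativity of $F$, the boundedness of $\overline{Q}^{\boldsymbol{\lambda}}$ and $F(\cdot,0)$, and the integrability of $e^{-2rs}$ on the half-line, then letting $n\to\infty$.

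The only genuinely delicate point is the unbounded-$T$ case: on an infinite horizon the naive Gronwall/fixed-point argument does not close, so one must run the a priori estimates against an exponential weight matched to the discount rate and use the strict dissipativity to absorb the quadratic terms, and one must verify that the integrability of $F(\cdot,0)$ and $\overline{\xi}$ in Assumption \ref{assumption_1}(i) is compatible with that weight so that the solution is in fact bounded rather than merely weighted-$L^2$. Since this is precisely what Theorem 3.3 of \cite{briand1998stability} is designed to handle, and the bounded case is covered by \cite{pardoux1990adapted}, the proof reduces to establishing the Lipschitz and dissipativity properties of $F$ and the integrability of $F(\cdot,0)$ and $\overline{\xi}$ in each regime, invoking those two theorems, and then deriving the explicit sup-bound and the $\mathbbm{H}^{2,d}_{-r}$ estimate as indicated; this is why Proposition \ref{prop_bsde_solvability} is stated here without a detailed proof.
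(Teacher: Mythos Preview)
Your proposal is correct and follows exactly the route the paper takes: the paper explicitly omits the proof, noting only that the result follows from Theorem~3.3 in \cite{briand1998stability} when $T$ is unbounded and from Theorem~4.1 in \cite{pardoux1990adapted} when $T$ is bounded. You have simply supplied the verification of the Lipschitz/monotonicity hypotheses and the integrability of the data needed to invoke those two theorems, together with a clean comparison argument for the sup-norm bound, so there is nothing to add.
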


We are now in a position to state the main result of this paper.

\begin{theorem}
\label{bigTheorem} Suppose that Assumption \ref{assumption_1} holds. Let $(\overline{Q}^{\boldsymbol{\lambda}},\overline{Z}^{\boldsymbol{\lambda}})$ be the unique solution to BSDE (\ref{final_game_value}), and define the value process
\begin{equation}\label{value_process}
Q_t^{\boldsymbol{\lambda}} =
e^{r (t\wedge T)}g^{-1}(e^{-r(t\wedge T)}\overline{Q}^{\boldsymbol{\lambda}}_t)-\int_0^{t\wedge T}
e^{-r(u-t\wedge T)}f_u\,du,
\end{equation}
for $t\geq 0$. Then, the value of the constrained risk-sensitive Dynkin game (\ref{upper_lowerValues}) exists and is given by
\[v^{\boldsymbol{\lambda}}=\overline{v}^{\boldsymbol{\lambda}}=\underline{v}^{\boldsymbol{\lambda}}=Q_0^{\boldsymbol{\lambda}}.\]
Moreover, the optimal stopping strategy of the game is given by
\begin{equation*}
\left\{\begin{array}{l}
\sigma^{*}=\inf\{T_N^{(1)}\geq T^{(1)}_1:Q_{T_N^{(1)}}^{\boldsymbol{\lambda}}\geq U_{T_N^{(1)}}\}\wedge T^{(1)}_{M_1};\\
\tau^{*}=\inf\{T_N^{(2)}\geq
T^{(2)}_1:Q_{T_N^{(2)}}^{\boldsymbol{\lambda}}\leq
L_{T_N^{(2)}}\}\wedge T^{(2)}_{M_2}.
\end{array}\right.
\end{equation*}
\end{theorem}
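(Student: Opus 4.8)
The plan is to exploit the reduction, already built into the auxiliary payoff processes $\overline{L}$, $\overline{U}$, $\overline{\xi}$, that rewrites the risk-sensitive payoff functional under the linear expectation. First I would observe that, applying $g$ to $R(\sigma,\tau)$ and using the definitions (\ref{final_game_value_parameters_L})--(\ref{final_game_value_parameters_xi}), the transformed payoff $e^{-r(\sigma\wedge\tau\wedge T)}\,\text{(something)}$ can be written so that
\[
g\!\left(R(\sigma,\tau)\right) = e^{-rT}\overline{\xi}\,\mathbbm{1}_{\{\sigma\wedge\tau\geq T\}} + e^{-r\tau}\overline{L}_\tau\,\mathbbm{1}_{\{\tau<T,\tau\leq\sigma\}} + e^{-r\sigma}\overline{U}_\sigma\,\mathbbm{1}_{\{\sigma<T,\sigma<\tau\}},
\]
so that $J(\sigma,\tau) = g^{-1}\big(\mathbbm{E}[g(R(\sigma,\tau))]\big)$ becomes $g^{-1}$ of a \emph{linear} payoff of the same obstacle-game type but with obstacles $\overline{L},\overline{U},\overline{\xi}$ and no running term. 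Since $g^{-1}$ is strictly increasing, it commutes with $\inf$ and $\sup$, so the original upper/lower values equal $g^{-1}$ applied to the upper/lower values of the transformed risk-neutral constrained Dynkin game with obstacles $\overline{L},\overline{U},\overline{\xi}$. This reduces everything to the setting of \cite{liang2018dynkin}, except that now the two players use two heterogeneous Poisson sequences.

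Next I would handle the heterogeneous-signal-times structure by merging the two Poisson sequences $T^{(1)}$ and $T^{(2)}$ into a single ordered sequence while tagging each arrival with its origin, and then set up a \emph{family} of games indexed by the current merged signal time $T^{(i)}_n$ (as flagged in the introduction via (\ref{upper_Values_discounted})--(\ref{lower_Values_discounted})). On this family I would establish a dynamic programming principle: at a signal time belonging to player $1$, the upper value equals $\min\{U_\cdot$ if stopping is admissible, continuation value$\}$, and similarly a $\max$ with $L_\cdot$ at player $2$'s signal times; at all other times the value simply propagates as a discounted conditional expectation. The key identity is that the continuation value between consecutive merged signal times, conditioned on $\mathbbm{G}$, equals the solution $\overline{Q}^{\boldsymbol{\lambda}}$ of BSDE (\ref{final_game_value}) evaluated at that time — because the generator $-\lambda^{(1)}(\overline{Q}-\overline{U})^+ + \lambda^{(2)}(\overline{L}-\overline{Q})^+ - r\overline{Q}$ is exactly the infinitesimal description of "wait, and upon a type-$1$ arrival replace $\overline{Q}$ by $\overline{Q}\wedge\overline{U}$, upon a type-$2$ arrival replace it by $\overline{Q}\vee\overline{L}$". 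I would make this rigorous either by a verification argument (showing the candidate value process is a sub/supermartingale along the two players' strategies) or by iterating the DPP between signal times and passing to the limit, using Proposition \ref{prop_bsde_solvability} for the regularity and integrability needed to control the iteration.

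For the saddle point, I would verify directly that with $\sigma^*,\tau^*$ as defined one has $J(\sigma^*,\tau)\leq Q_0^{\boldsymbol{\lambda}}\leq J(\sigma,\tau^*)$ for all admissible $\sigma,\tau$. Transform through $g$: it suffices to show the corresponding inequalities for the linear value, which follow because along the merged sequence the process $e^{-r\cdot}\overline{Q}^{\boldsymbol{\lambda}}_\cdot$ (augmented with the obstacle jumps) is a supermartingale up to $\sigma^*$ and a submartingale up to $\tau^*$, with equality at the stopping rules $\{\overline{Q}^{\boldsymbol{\lambda}}\geq\overline{U}\}$ and $\{\overline{Q}^{\boldsymbol{\lambda}}\leq\overline{L}\}$; note $\{Q^{\boldsymbol{\lambda}}_t\geq U_t\}=\{\overline{Q}^{\boldsymbol{\lambda}}_t\geq\overline{U}_t\}$ by monotonicity of $g$, so the stopping rules in the statement match. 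The main obstacle I anticipate is the DPP/verification step with \emph{two} heterogeneous sequences: one must carefully track, when the game "starts" at one player's signal time which is not the other player's, which obstacle is currently active, and ensure the merged-sequence bookkeeping in the control sets $\mathcal{R}^{(i)}_n$ is consistent with the BSDE generator having both the $(\cdot-\overline{U})^+$ and $(\overline{L}-\cdot)^+$ terms present simultaneously rather than alternating. The unbounded-horizon case adds a further technical layer, requiring the boundedness in Assumption \ref{assumption_1}(i) and the estimate $\overline{Z}^{\boldsymbol{\lambda}}\in\mathbbm{H}^{2,d}_{-r}$ to justify the localization when taking expectations.
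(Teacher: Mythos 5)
Your proposal is correct and follows essentially the same route as the paper: the disjoint-event decomposition that lets $g$ pass inside the payoff (turning $\tilde{\mathbbm{E}}$ into $g^{-1}$ of a linear expectation of $\overline{L},\overline{U},\overline{\xi}$), the merged Poisson sequence $\theta$ with a family of games started at $\theta_{k-1}$, the identification of the continuation value with $\overline{Q}^{\boldsymbol{\lambda}}$ via It\^{o}'s formula and the exponential inter-arrival densities (the paper's Lemma \ref{firstLemma}), and the sub/super/martingale verification of the saddle point (Lemma \ref{secondLemma} and the concluding argument). The points you flag as delicate — the bookkeeping when the game starts at the counterparty's signal time, and the localization in the unbounded-horizon case using Assumption \ref{assumption_1}(i) and $\overline{Z}^{\boldsymbol{\lambda}}\in\mathbbm{H}^{2,d}_{-r}$ — are exactly the ones the paper handles.
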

\begin{remark}
For a special case of exponential risk-sensitive function $g$ (see
section \ref{section:exponential}), the representation formula
(\ref{value_process}) is closely related to Cole-Hopf transformation in the BSDE
literature, which is widely used to linearize a class of BSDEs with
quadratic growth (see \cite{Kobylanski}). Our representation formula
(\ref{value_process}) can be regarded as a stochastic control
version of Cole-Hopf transformation.
\end{remark}

\section{Proof of Theorem \ref{bigTheorem}}
Since the two players stop at two different sequences of Poisson arrival times, the first step to prove Theorem \ref{bigTheorem} is merging the two Poisson sequences together while still keeping track of their order.
To this end, for each $T^{(1)}$ and $T^{(2)}$, we construct an increasing sequence of $\mathbbm{G}$-stopping times $\theta=(\theta_k)_{k\geq 0}$ as follows:
\begin{align*}
\theta_0 &= T^{(1)}_0=T^{(2)}_0=0,\\
\theta_1 &= \min\left(T^{(1)}_1,T^{(2)}_1\right),\\
\theta_2 &= \min\left(T^{(1)}_1\mathbbm{1}_{\{T^{(1)}_1>\theta_1\}}+T^{(1)}_2\mathbbm{1}_{\{T^{(1)}_1\leq \theta_1\}},T^{(2)}_1\mathbbm{1}_{\{T^{(2)}_1>\theta_1\}}+T^{(2)}_2\mathbbm{1}_{\{T^{(2)}_1\leq \theta_1\}}\right),\\
\theta_3 &=\min\left(T^{(1)}_1\mathbbm{1}_{\{T^{(1)}_1>\theta_2\}}+T^{(1)}_3\mathbbm{1}_{\{T^{(1)}_1\leq \theta_2\}},T^{(1)}_2\mathbbm{1}_{\{T^{(1)}_2>\theta_2\}}+T^{(1)}_3\mathbbm{1}_{\{T^{(1)}_2\leq \theta_2\}},\right.\\
&\qquad\qquad \left.T^{(2)}_1\mathbbm{1}_{\{T^{(2)}_1>\theta_2\}}+T^{(2)}_3\mathbbm{1}_{\{T^{(2)}_1\leq \theta_2\}},T^{(2)}_2\mathbbm{1}_{\{T^{(2)}_2>\theta_2\}}+T^{(2)}_3\mathbbm{1}_{\{T^{(2)}_2\leq \theta_2\}}\right),\\
&\cdots, \\
\theta_k &= \min\left(T^{(1)}_1\mathbbm{1}_{\{T^{(1)}_1>\theta_{k-1}\}}+T^{(1)}_k\mathbbm{1}_{\{T^{(1)}_1\leq \theta_{k-1}\}},\cdots,T^{(1)}_{k-1}\mathbbm{1}_{\{T^{(1)}_{k-1}>\theta_{k-1}\}}+T^{(1)}_k\mathbbm{1}_{\{T^{(1)}_{k-1}\leq \theta_{k-1}\}},\right.\\
&\qquad\qquad \left.T^{(2)}_1\mathbbm{1}_{\{T^{(2)}_1>\theta_{k-1}\}}+T^{(2)}_k\mathbbm{1}_{\{T^{(2)}_1\leq \theta_{k-1}\}},\cdots,T^{(2)}_{k-1}\mathbbm{1}_{\{T^{(2)}_{k-1}>\theta_{k-1}\}}+T^{(2)}_k\mathbbm{1}_{\{T^{(2)}_{k-1}\leq \theta_{k-1}\}}\right),\\
&\cdots.
\end{align*}

In Figure \ref{sequence_simulation}, we illustrate the construction of the merged sequence $\theta$, where the top and the middle line are a realization of $T^{(1)}$ and $T^{(2)}$, and the bottom line is the merged sequence $\theta$. Intuitively, given any $\mathbbm{G}$-stopping time $\theta_{k-1}$, $k\geq 1$, (to be used as the starting times for a family of constrained Dynkin games (\ref{upper_Values_discounted})-(\ref{lower_Values_discounted}) below), we find the first arrival time of each Poisson sequence following $\theta_{k-1}$, say $T^{(1)}_{k_1}$ and $T^{(2)}_{k_2}$ for some $k_1,k_2\geq 0$, and then define $\theta_{k}=\min\{T^{(1)}_{k_1},T^{(2)}_{k_2}\}$.
Moreover, given the stopping time $\theta_k$, we define pre-$\theta_k$ $\sigma$-field:
\[\mathcal{G}_{\theta_k}=\left\{A\in \bigvee_{s\geq 0}\mathcal{G}_s:A\cap \{\theta_k\leq s\}\in \mathcal{G}_s\mbox{ for }s\geq 0\right\},\]
and $\tilde{\mathbbm{G}}=\{\mathcal{G}_{\theta_k}\}_{k\geq 0}$.

\begin{figure}[h]
  \includegraphics[width=\textwidth]{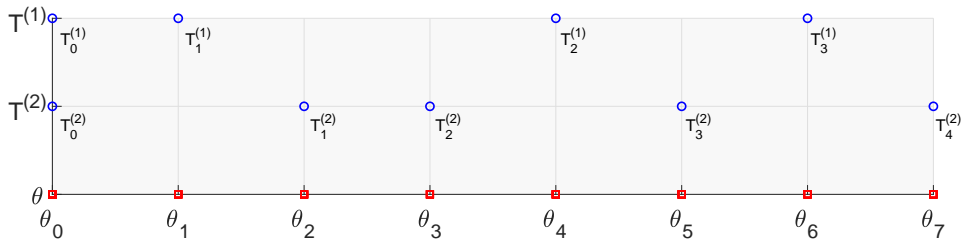}
  \centering
  \caption{An illustration of a merged Poisson arrival sequence $\theta$.}
  \label{sequence_simulation}
\end{figure}

Next, we tackle the nonlinear expectation $\tilde{\mathbbm{E}}$ associated with the risk-sensitive function $g$. To this end, introduce the \emph{discounted processes}
\begin{eqnarray}
\tilde{L}_t = e^{-rt}L_t+\int_0^t  e^{-ru}f_u\,du,\label{final_game_value_parameters_discounted_L}\\
\tilde{U}_t = e^{-rt}U_t+\int_0^t e^{-ru}f_u\,du,\label{final_game_value_parameters_discounted_U}\\
\tilde{\xi} = e^{-rT}\xi+\int_0^T e^{-ru}f_u\,du,\label{final_game_value_parameters_discounted_xi}
\end{eqnarray}
and rewrite the discounted payoff functional $R(\sigma,\tau)$ as
\begin{equation}
\label{def_payoff_modified}
\tilde{R}(\sigma,\tau)=\tilde{\xi}\mathbbm{1}_{\{\sigma\wedge \tau \geq T\}}+\tilde{L}_{\tau}\mathbbm{1}_{\{\tau<T,\tau\leq\sigma\}}+\tilde{U}_{\sigma}\mathbbm{1}_{\{\sigma<T,\sigma< \tau\}}=R(\sigma,\tau).
\end{equation}
In turn, consider a \emph{family of constrained risk-sensitive Dynkin games} starting from $\theta_{k-1}$, for $k\geq 1$, whose upper and lower values are defined by
\begin{eqnarray}
\overline{q}_{\theta_{k-1}}^{\boldsymbol{\lambda}}=\essinf_{\sigma\in\tilde{\mathcal{R}}^{(1)}_{\theta_k}}\esssup_{\tau\in\tilde{\mathcal{R}}^{(2)}_{\theta_k}} \tilde{\mathbbm{E}}\left[\tilde{R}\left(\sigma,\tau\right)|\mathcal{G}_{\theta_{k-1}}\right],\label{upper_Values_discounted}\\
\underline{q}_{\theta_{k-1}}^{\boldsymbol{\lambda}}=\esssup_{\tau\in\tilde{\mathcal{R}}^{(2)}_{\theta_k}}\essinf_{\sigma\in \tilde{\mathcal{R}}^{(1)}_{\theta_k}}  \tilde{\mathbbm{E}}\left[\tilde{R}\left(\sigma,\tau\right)|\mathcal{G}_{\theta_{k-1}}\right],\label{lower_Values_discounted}
\end{eqnarray}
where
\begin{equation}
\label{def_control_set_modified}
\tilde{\mathcal{R}}_{\theta_{k}}^{(i)}=\{\mathbbm{G}\mbox{-stopping
time }\sigma\mbox{ for }\sigma(\omega)=T^{(i)}_N(\omega)\mbox{ where
}T^{(i)}_N(\omega)\geq \theta_{k}\mbox{ and }N\leq M_i(\omega)\}.
\end{equation}

\begin{remark}
Note that in the above definition of control set
$\tilde{\mathcal{R}}^{(i)}_{\theta_k}$, $\theta_{k}$ is not
necessary from the Poisson sequence $T^{(i)}$, so
$\tilde{\mathcal{R}}^{(i)}_{\theta_k}$ is in general different from
$\mathcal{R}_k^{(i)}$ in (\ref{control_set}). However, they do
coincide when $k=1$:
$\tilde{\mathcal{R}}^{(i)}_{\theta_1}=\mathcal{R}_1^{(i)}$.

On the other hand, thanks to the introduction of the discounted
processes $\tilde{L}$, $\tilde{U}$ and $\tilde{\xi}$ in
(\ref{final_game_value_parameters_discounted_L})-(\ref{final_game_value_parameters_discounted_xi}),
the payoff functional in (\ref{def_payoff_modified}) can be divided
into three disjoint sets and the risk-sensitive function $g$ can be
applied to each of them separately. Thus, we can rewrite the
payoff in
(\ref{upper_Values_discounted})-(\ref{lower_Values_discounted})
under the linear expectation $\mathbb{E}$ of the auxiliary payoff processes $\overline{L}$, $\overline{U}$ and $\overline{\xi}$ as
$$\tilde{\mathbbm{E}}\left[\tilde{R}\left(\sigma,\tau\right)|\mathcal{G}_{\theta_{k-1}}\right]=g^{-1}\left(\mathbbm{E}\left[
e^{-rT}\overline{\xi}\mathbbm{1}_{\{ \sigma\wedge \tau \geq T\}}+e^{-r\tau}\overline{L}_{\tau}\mathbbm{1}_{\{\tau<T,\tau\leq\sigma\}}+e^{-r\sigma}\overline{U}_{\sigma}\mathbbm{1}_{\{\sigma<T,\sigma< \tau\}}|\mathcal{G}_{\theta_{k-1}}\right]\right).
$$
This motives us to introduce the Cole-Hopf representation formula (\ref{value_process}).
\end{remark}

The constrained risk-sensitive Dynkin game (\ref{upper_Values_discounted})-(\ref{lower_Values_discounted}) is said to have value $q_{\theta_{k-1}}^{\boldsymbol{\lambda}}$ if $q_{\theta_{k-1}}^{\boldsymbol{\lambda}}=\overline{q}_{\theta_{k-1}}^{\boldsymbol{\lambda}}=\underline{q}_{\theta_{k-1}}^{\boldsymbol{\lambda}}$, and $(\sigma^*_{k},\tau^*_k)\in\tilde{\mathcal{R}}^{(1)}_{\theta_k} \times \tilde{\mathcal{R}}^{(2)}_{\theta_k}$ is called an optimal stopping strategy of the game if
\[\tilde{\mathbbm{E}}\left[\tilde{R}(\sigma^*_{k},\tau)|\mathcal{G}_{\theta_{k-1}}\right] \leq \tilde{\mathbbm{E}}\left[\tilde{R}(\sigma^*_{k},\tau^*_{k})|\mathcal{G}_{\theta_{k-1}}\right] \leq \tilde{\mathbbm{E}}\left[\tilde{R}(\sigma,\tau^*_{k})|\mathcal{G}_{\theta_{k-1}}\right],\]
for every $(\sigma,\tau)\in\tilde{\mathcal{R}}^{(1)}_{\theta_k} \times \tilde{\mathcal{R}}^{(2)}_{\theta_k}$.
In particular,  when $k = 1$, (\ref{upper_Values_discounted})-(\ref{lower_Values_discounted}) corresponds to the original constrained Dynkin game (\ref{upper_lowerValues}). Thus, to prove Theorem \ref{bigTheorem}, it is sufficient to show that
\[q_{\theta_{k-1}}^{\boldsymbol{\lambda}}=\overline{q}_{\theta_{k-1}}^{\boldsymbol{\lambda}}=\underline{q}_{\theta_{k-1}}^{\boldsymbol{\lambda}}=\tilde{Q}_{\theta_{k-1}}^{\boldsymbol{\lambda}},\]
and the optimal stopping strategy is given by
\begin{equation}
\label{optimalStop_general}
\left\{\begin{array}{l}
\sigma^{*}_{k}=\inf\{T^{(1)}_N\geq \theta_{k}:\tilde{Q}_{T^{(1)}_N}^{\boldsymbol{\lambda}} \geq \tilde{U}_{T^{(1)}_N}\}\wedge T^{(1)}_{M_1},\\
\tau^{*}_{k}=\inf\{T^{(2)}_N\geq
\theta_{k}:\tilde{Q}_{T^{(2)}_N}^{\boldsymbol{\lambda}} \leq
\tilde{L}_{T^{(2)}_N}\}\wedge T^{(2)}_{M_2},
\end{array}\right.
\end{equation}
where $\tilde{Q}^{\boldsymbol{\lambda}}$ is given by
\begin{equation}
\label{final_game_value_general_discounted}
\tilde{Q}^{\boldsymbol{\lambda}}_t =
g^{-1}(e^{-r(t\wedge T)}\overline{Q}^{\boldsymbol{\lambda}}_t),
\end{equation}
with $\overline{Q}^{\boldsymbol{\lambda}}$ being the first component
of the solution to BSDE (\ref{final_game_value}). In turn, the value
process $Q^{\boldsymbol{\lambda}}$ in (\ref{bigTheorem}) is given
via the discounted process $\tilde{Q}^{\boldsymbol{\lambda}}$ via
the relationship
\begin{equation}\label{value_process_relation}
Q^{\boldsymbol{\lambda}}_t=e^{r(t\wedge T)}\tilde{Q}_t^{\boldsymbol{\lambda}}-\int_0^{t\wedge T}e^{-r(u-t\wedge T)}f_udu.
\end{equation}
Note that, for $t\geq T$,
$$Q^{\boldsymbol{\lambda}}_t=e^{r T}g^{-1}(e^{-r T}\overline{\xi})-\int_0^{ T}e^{-r(u-T)}f_udu=\xi.$$

\begin{remark} For the reader's convenience, we recall the notations that have been introduced
thus far. For the payoff processes $h=L,U,\xi$, we have defined the discounted processes
$\tilde{h}_t=e^{-rt}h_t+\int_0^t e^{-ru}f_u\,du,$ and auxiliary payoff processes
$\overline{h}_t=e^{rt}g(\tilde{h}_t)$. In terms of the value process
$Q^{\boldsymbol{\lambda}}$, likewise we have
$\tilde{Q}^{\boldsymbol{\lambda}}_t=e^{-rt}Q^{\boldsymbol{\lambda}}_t+\int_0^t
e^{-ru}f_u\,du$, and
$\overline{Q}^{\boldsymbol{\lambda}}_t=e^{rt}g(\tilde{Q}^{\boldsymbol{\lambda}}_t)$, for $t\in[0,T]$.
\end{remark}

To prove the above assertions (and therefore Theorem \ref{bigTheorem}), we start with the following lemma.
\begin{lemma}
\label{firstLemma} Suppose that Assumption \ref{assumption_1} holds.
Then, $\tilde{Q}_{\theta_{k-1}}^{\boldsymbol{\lambda}}$ given in
(\ref{final_game_value_general_discounted}) satisfies the recursive
equation
\begin{multline}
\label{recursiveEq} \tilde{Q}_{\theta_{k-1}}^{\boldsymbol{\lambda}}=
\tilde{\mathbbm{E}}\left[\left.\tilde{\xi}\mathbbm{1}_{\{\theta_k \geq T\}}+\left(\min\{\tilde{U}_{\theta_k},\tilde{Q}^{\boldsymbol{\lambda}}_{\theta_k}
\}\mathbbm{1}_{\{\theta_k \in
T^{(1)}\}}+\max\{\tilde{L}_{\theta_k},\tilde{Q}_{\theta_k}^{\boldsymbol{\lambda}}\}\mathbbm{1}_{\{\theta_k
\in T^{(2)} \}}\right)\mathbbm{1}_{\{\theta_k <
T\}}\right|\mathcal{G}_{\theta_{k-1}}\right],
\end{multline}
for $k\geq 1 $.
\end{lemma}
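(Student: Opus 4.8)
The goal is a one-step dynamic programming identity: starting the family of games at $\theta_{k-1}$, the value $\tilde Q^{\boldsymbol\lambda}_{\theta_{k-1}}$ equals the conditional nonlinear expectation of the payoff obtained by "playing optimally after the next signal $\theta_k$". The target RHS of \eqref{recursiveEq} says: if the game already ended ($\theta_k\ge T$) we collect $\tilde\xi$; if the next signal belongs to player 1 ($\theta_k\in T^{(1)}$) then player 1 compares the continuation value $\tilde Q^{\boldsymbol\lambda}_{\theta_k}$ with the obstacle $\tilde U_{\theta_k}$ and takes the minimum (he stops iff $\tilde Q^{\boldsymbol\lambda}_{\theta_k}\ge \tilde U_{\theta_k}$); symmetrically if $\theta_k\in T^{(2)}$ player 2 takes the maximum of $\tilde Q^{\boldsymbol\lambda}_{\theta_k}$ and $\tilde L_{\theta_k}$. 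So what must be shown is that the process defined through the BSDE, via $\tilde Q^{\boldsymbol\lambda}_t = g^{-1}(e^{-r(t\wedge T)}\overline Q^{\boldsymbol\lambda}_t)$, satisfies this functional equation — i.e. this is a Feynman–Kac–type verification, not yet a game-theoretic argument.

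**Key steps.**

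First I would pass from $\tilde Q^{\boldsymbol\lambda}$ back to $\overline Q^{\boldsymbol\lambda}$ and restate the claim purely at the level of the BSDE \eqref{final_game_value}: using $\overline Q^{\boldsymbol\lambda}_t = e^{rt}g(\tilde Q^{\boldsymbol\lambda}_t)$ and the definitions \eqref{final_game_value_parameters_L}–\eqref{final_game_value_parameters_xi} of $\overline L,\overline U,\overline\xi$, the identity to prove becomes
\[
\overline Q^{\boldsymbol\lambda}_{\theta_{k-1}} = \mathbbm E\!\left[\left.e^{-rT}\overline\xi\,\mathbbm 1_{\{\theta_k\ge T\}} + e^{-r\theta_k}\Bigl(\min\{\overline U_{\theta_k},\overline Q^{\boldsymbol\lambda}_{\theta_k}\}\mathbbm 1_{\{\theta_k\in T^{(1)}\}} + \max\{\overline L_{\theta_k},\overline Q^{\boldsymbol\lambda}_{\theta_k}\}\mathbbm 1_{\{\theta_k\in T^{(2)}\}}\Bigr)\mathbbm 1_{\{\theta_k<T\}}\right|\mathcal G_{\theta_{k-1}}\right],
\]
after multiplying the RHS through by $e^{-r\theta_{k-1}}$ — here I use $g(\min(a,b))=\min(g(a),g(b))$ and $g(\max(a,b))=\max(g(a),g(b))$ since $g$ is strictly increasing, which is exactly where the nonlinear expectation collapses. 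Second, I would condition on the Brownian filtration: write $\theta_k = T^{(1)}_{k_1}\wedge T^{(2)}_{k_2}$ where $T^{(i)}_{k_i}$ is the first point of $T^{(i)}$ strictly after $\theta_{k-1}$; given $\mathcal F_\infty$ and $\mathcal G_{\theta_{k-1}}$, the two waiting times $T^{(1)}_{k_1}-\theta_{k-1}$ and $T^{(2)}_{k_2}-\theta_{k-1}$ are independent exponentials with rates $\lambda^{(1)},\lambda^{(2)}$ (this uses the independence of the Poisson streams from $W$ and the strong Markov property of Poisson processes). Hence $\theta_k-\theta_{k-1}$ is exponential with rate $\lambda^{(1)}+\lambda^{(2)}$, and conditionally on its value being $t$, the event $\{\theta_k\in T^{(1)}\}$ has probability $\lambda^{(1)}/(\lambda^{(1)}+\lambda^{(2)})$ independent of $t$. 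Third, I would compute the inner $\mathcal F$-conditional expectation by integrating the exponential density: the RHS becomes
\[
\mathbbm E\!\left[\left.\int_{\theta_{k-1}}^{T} e^{-(\lambda^{(1)}+\lambda^{(2)})(t-\theta_{k-1})}\Bigl(\lambda^{(1)}\min\{\overline U_t,\overline Q^{\boldsymbol\lambda}_t\} + \lambda^{(2)}\max\{\overline L_t,\overline Q^{\boldsymbol\lambda}_t\}\Bigr)e^{-rt}\,dt + e^{-(\lambda^{(1)}+\lambda^{(2)})(T-\theta_{k-1})}e^{-rT}\overline\xi\right|\mathcal G_{\theta_{k-1}}\right],
\]
multiplied by $e^{-r\theta_{k-1}}$ absorbed appropriately. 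Fourth — the crux — I would show this integral representation is precisely the solution at time $\theta_{k-1}$ of the BSDE \eqref{final_game_value}. This is done by rewriting the BSDE driver as
\[
-\lambda^{(1)}(\overline Q^{\boldsymbol\lambda}_s-\overline U_s)^+ + \lambda^{(2)}(\overline L_s-\overline Q^{\boldsymbol\lambda}_s)^+ - r\overline Q^{\boldsymbol\lambda}_s = \lambda^{(1)}\min\{\overline U_s,\overline Q^{\boldsymbol\lambda}_s\} + \lambda^{(2)}\max\{\overline L_s,\overline Q^{\boldsymbol\lambda}_s\} - (\lambda^{(1)}+\lambda^{(2)}+r)\overline Q^{\boldsymbol\lambda}_s,
\]
so that $\overline Q^{\boldsymbol\lambda}$ solves a linear BSDE with extra killing rate $\lambda^{(1)}+\lambda^{(2)}$ and exogenous source term $\lambda^{(1)}\min\{\overline U,\overline Q^{\boldsymbol\lambda}\}+\lambda^{(2)}\max\{\overline L,\overline Q^{\boldsymbol\lambda}\}$; applying the standard linear-BSDE representation (discounting by the extra rate $\lambda^{(1)}+\lambda^{(2)}$, with $e^{-r\cdot}$ already built in, and taking $\mathcal G_{\theta_{k-1}}$-conditional expectation to kill the $\overline Z\,dW$ martingale term on the stochastic interval $[\theta_{k-1},T]$) yields exactly the displayed integral. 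Matching the two finishes the proof.

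**Main obstacle.**

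The routine-looking part is the linear-BSDE representation, but two things need care. One is justifying the interchange of conditioning: I must verify that $\overline Q^{\boldsymbol\lambda}$, being adapted to the Brownian filtration $\mathbbm F$, is conditionally (given $\mathcal F_\infty$) a deterministic-in-the-Poisson-randomness process, so that integrating against the exponential density of $\theta_k$ is legitimate and commutes with $\mathbbm E[\cdot|\mathcal G_{\theta_{k-1}}]$; this rests on the product structure $\mathcal G_t=\mathcal F_t\vee\mathcal H_t$ with $\mathbbm F\perp\mathbbm H$. The other, genuinely delicate point is the handling of the random terminal time $T$: since $T$ is an $\mathbbm F$-stopping time (not deterministic), the "horizon" in the exponential integral is itself random, and on $\{\theta_k\ge T\}$ the continuation payoff must be $\overline\xi$ rather than $\overline Q^{\boldsymbol\lambda}_{\theta_k}$ — I would handle this by noting that $\overline Q^{\boldsymbol\lambda}_t\equiv\overline\xi$ for $t\ge T$ by the convention after \eqref{final_game_value_parameters_xi}, and by working on the stochastic interval $[\theta_{k-1}\wedge T,\,T]$ throughout so that the BSDE representation and the density integral share the same (random) endpoint. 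Once the bookkeeping of $T$ versus $\theta_k$ is done consistently, everything else is the linear-BSDE computation above together with the monotone-function identities $g\circ\min=\min\circ g$, $g\circ\max=\max\circ g$.
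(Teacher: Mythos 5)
Your proposal is correct and follows essentially the same route as the paper's proof: reduce via the monotonicity of $g$ (so that $g\circ\min=\min\circ g$, $g\circ\max=\max\circ g$) to an identity for $\overline{Q}^{\boldsymbol{\lambda}}$, rewrite the BSDE driver as $\lambda^{(1)}\min\{\overline{U},\overline{Q}^{\boldsymbol{\lambda}}\}+\lambda^{(2)}\max\{\overline{L},\overline{Q}^{\boldsymbol{\lambda}}\}-(\lambda^{(1)}+\lambda^{(2)}+r)\overline{Q}^{\boldsymbol{\lambda}}$ and obtain the exponentially discounted integral representation (the paper does this by applying It\^{o}'s formula to $e^{-(\lambda^{(1)}+\lambda^{(2)})t}g(\tilde{Q}^{\boldsymbol{\lambda}}_t)$), then identify it with the right-hand side by integrating against the joint exponential density of the first arrivals of $T^{(1)}$ and $T^{(2)}$ after $\theta_{k-1}$. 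The points you flag as delicate (independence of the Poisson streams from $\mathbbm{F}$, the convention $\overline{Q}^{\boldsymbol{\lambda}}_t\equiv\overline{\xi}$ for $t\geq T$, and the trivial case $\theta_{k-1}>T$) are handled the same way in the paper.
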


\begin{proof}
It is equivalent to prove that
\begin{multline}
\label{recursiveEq_exponential}
g(\tilde{Q}^{\boldsymbol{\lambda}}_{\theta_{k-1}})=\
\mathbbm{E}\left[g(\tilde{\xi})\mathbbm{1}_{\{\theta_k
\geq T\}}\right.\\
\left.\left.+\left(\min\{g(\tilde{U}_{\theta_k}),g(\tilde{Q}^{\boldsymbol{\lambda}}_{\theta_k})
\}\mathbbm{1}_{\{\theta_k \in
T^{(1)}\}}+\max\{g(\tilde{L}_{\theta_k}),g(\tilde{Q}_{\theta_k}^{\boldsymbol{\lambda}})\}\mathbbm{1}_{\{\theta_k
\in T^{(2)} \}}\right)\mathbbm{1}_{\{\theta_k <
T\}}\right|\mathcal{G}_{\theta_{k-1}}\right],
\end{multline}
where $g(\tilde{\xi}) = e^{-rT}\overline{\xi}$, $g(\tilde{L}_t) =
e^{-rt}\overline{L}_t$ and $g(\tilde{U}_t) = e^{-rt}\overline{U}_t$. {For $k$ such that $\theta_{k-1} > T$, it follows from (\ref{final_game_value_general_discounted}) that $g(\tilde{Q}_{\theta_{k-1}}^{\boldsymbol{\lambda}})=g(\tilde{\xi})$, and thus (\ref{recursiveEq_exponential}) holds. In the rest of the proof, we only focus on the cases where $\theta_{k-1} \leq T$.}

By applying It\^{o}'s formula to $\alpha_t g(\tilde{Q}_t^{\boldsymbol{\lambda}})$, where $\alpha_t=e^{-(\lambda^{(1)}+\lambda^{(2)})t}$, we can obtain that
\begin{align*}
\alpha_{t\wedge T} g(\tilde{Q}^{\boldsymbol{\lambda}}_{t\wedge T}) =&\ \alpha_T g(\tilde{\xi}) +\int_{t\wedge T}^T \alpha_s\bigg[(\lambda^{(1)}+\lambda^{(2)})g(\tilde{Q}^{\boldsymbol{\lambda}}_s)-\lambda^{(1)} \left(g(\tilde{Q}^{\boldsymbol{\lambda}}_s)-g(\tilde{U}_s)\right)^+\\
&+\lambda^{(2)}\left(g(\tilde{L}_s)-g(\tilde{Q}^{\boldsymbol{\lambda}}_s)\right)^+\bigg]\,ds-\int_{t\wedge T}^T \alpha_se^{-rs}\overline{Z}_s^{\boldsymbol{\lambda}}\,dW_s\\
=&\ \alpha_T g(\tilde{\xi})+\int_{t\wedge T}^T \alpha_s\left[\lambda^{(1)} \min\left\{g(\tilde{U}_s),g(\tilde{Q}^{\boldsymbol{\lambda}}_s)\right\}+\lambda^{(2)}\max\left\{g(\tilde{L}_s),g(\tilde{Q}^{\boldsymbol{\lambda}}_s)\right\}\right]\,ds\\
&-\int_{t\wedge T}^T \alpha_se^{-rs}\overline{Z}_s^{\boldsymbol{\lambda}}\,dW_s,
\end{align*}
for $t \geq 0$. By choosing $t=\theta_{k-1}$ and taking the conditional expectation with respect to $\mathcal{G}_{\theta_{k-1}}$, we further have
\begin{multline}
\label{proof_recursive_equation_eq01}
g(\tilde{Q}^{\boldsymbol{\lambda}}_{\theta_{k-1}}) = \mathbbm{E}\bigg[e^{-(\lambda^{(1)}+\lambda^{(2)})(T-\theta_{k-1})}g(\tilde{\xi})\\
+\int_{\theta_{k-1}}^T e^{-(\lambda^{(1)}+\lambda^{(2)})(s-\theta_{k-1})}\left[\lambda^{(1)} \min\left\{g(\tilde{U}_s),g(\tilde{Q}^{\boldsymbol{\lambda}}_s)\right\}+\lambda^{(2)}\max\left\{g(\tilde{L}_s),g(\tilde{Q}^{\boldsymbol{\lambda}}_s)\right\}\right]\,ds\bigg|\mathcal{G}_{\theta_{k-1}}\bigg],
\end{multline}
for any $k \geq 1$.

On the other hand, by defining $\tilde{T}^{(i)}_t$ as the first arrival time in $T^{(i)}$ following any fixed time $t$, i.e. $\tilde{T}^{(i)}_t=\inf\{T_N^{(i)} \geq T_1^{(i)}: T_N^{(i)}> t\}$, we can rewrite the right-hand-side of (\ref{recursiveEq_exponential}) as
\begin{multline}
\label{recursiveEq_exponential_rewrite}
\mathbbm{E} \left[g(\tilde{\xi})\mathbbm{1}_{\{\tilde{T}^{(1)}_{\theta_{k-1}} \wedge \tilde{T}^{(2)}_{\theta_{k-1}} \geq T\}}+\min\left\{g(\tilde{U}_{\tilde{T}^{(1)}_{\theta_{k-1}}}),g(\tilde{Q}_{\tilde{T}^{(1)}_{\theta_{k-1}}}^{\boldsymbol{\lambda}})\right\}\mathbbm{1}_{\{\tilde{T}^{(1)}_{\theta_{k-1}}< T,\tilde{T}^{(1)}_{\theta_{k-1}}<\tilde{T}^{(2)}_{\theta_{k-1}}\}}\right.\\
\qquad\qquad\qquad +\left.\left.\max\left\{g(\tilde{L}_{\tilde{T}^{(2)}_{\theta_{k-1}}}),g(\tilde{Q}_{\tilde{T}^{(2)}_{\theta_{k-1}}}^{\boldsymbol{\lambda}})\right\}\mathbbm{1}_{\{\tilde{T}^{(2)}_{\theta_{k-1}}< T,\tilde{T}^{(2)}_{\theta_{k-1}} \leq \tilde{T}^{(1)}_{\theta_{k-1}}\}}\right|\mathcal{G}_{\theta_{k-1}}\right].
\end{multline}
Indeed, applying the joint probability density function of $(\tilde{T}^{(1)}_{\theta_{k-1}},\tilde{T}^{(2)}_{\theta_{k-1}})$ conditional on $\mathcal{G}_{\theta_{k-1}}$,
\[p_{\theta_{k-1}}(S,U)=\lambda^{(1)}e^{-\lambda^{(1)}(S-\theta_{k-1})}\lambda^{(2)}e^{-\lambda^{(2)}(U-\theta_{k-1})},\]
yields that
\begin{align*}
&\mathbbm{E} \left[\left.g(\tilde{\xi})\mathbbm{1}_{\{\tilde{T}^{(1)}_{\theta_{k-1}} \wedge \tilde{T}^{(2)}_{\theta_{k-1}} \geq T\}}\right|\mathcal{G}_{\theta_{k-1}}\right] \\
=&\ \mathbbm{E} \left[\left.g(\tilde{\xi})\iint_{S\wedge U \geq T}p_{\theta_{k-1}}(S,U)\,dS\,dU\right|\mathcal{G}_{\theta_{k-1}}\right]\\
=&\ \mathbbm{E} \bigg[g(\tilde{\xi})\underbrace{\iint_{U \geq S \geq T}\lambda^{(1)}e^{-\lambda^{(1)}(S-\theta_{k-1})}\lambda^{(2)}e^{-\lambda^{(2)}(U-\theta_{k-1})}\,dS\,dU}_{\mbox{(I)}}\bigg|\mathcal{G}_{\theta_{k-1}}\bigg]\\
&+ \mathbbm{E} \bigg[g(\tilde{\xi})\underbrace{\iint_{S \geq U \geq T} \lambda^{(1)}e^{-\lambda^{(1)}(S-\theta_{k-1})}\lambda^{(2)}e^{-\lambda^{(2)}(U-\theta_{k-1})} \,dS\,dU}_{\mbox{(II)}}\bigg|\mathcal{G}_{\theta_{k-1}}\bigg],
\end{align*}
where the first integral
\[\mbox{(I)} = \lambda^{(1)} \int_T^{\infty} e^{-\lambda^{(1)}(S-\theta_{k-1})}(\int_S^{\infty} \lambda^{(2)}e^{-\lambda^{(2)}(U-\theta_{k-1})}\,dU )\,dS = \frac{\lambda^{(1)}}{\lambda^{(1)}+\lambda^{(2)}}e^{-(\lambda^{(1)}+\lambda^{(2)})(T-\theta_{k-1})},\]
and, similarly, the second integral
\[\mbox{(II)} = \frac{\lambda^{(2)}}{\lambda^{(1)}+\lambda^{(2)}}e^{-(\lambda^{(1)}+\lambda^{(2)})(T-\theta_{k-1})}.\]
In turn, we obtain
\begin{equation}
\label{eq_3.17_rhs_eq01}
\mathbbm{E} \left[\left.g(\tilde{\xi})\mathbbm{1}_{\{\tilde{T}^{(1)}_{\theta_{k-1}} \wedge \tilde{T}^{(2)}_{\theta_{k-1}} \geq T\}}\right|\mathcal{G}_{\theta_{k-1}}\right] = \mathbbm{E}\left[\left.e^{-(\lambda^{(1)}+\lambda^{(2)})(T-\theta_{k-1})}g(\tilde{\xi})\right|\mathcal{G}_{\theta_{k-1}}\right].
\end{equation}
Similarly, we have
\begin{align}
&\mathbbm{E} \left[\left.\min\left\{g(\tilde{U}_{\tilde{T}^{(1)}_{\theta_{k-1}}}),g(\tilde{Q}_{\tilde{T}^{(1)}_{\theta_{k-1}}}^{\boldsymbol{\lambda}})\right\}\mathbbm{1}_{\{\tilde{T}^{(1)}_{\theta_{k-1}}< T,\tilde{T}^{(1)}_{\theta_{k-1}}<\tilde{T}^{(2)}_{\theta_{k-1}}\}} \right|\mathcal{G}_{\theta_{k-1}}\right] \nonumber\\
=&\ \mathbbm{E} \left[\left.\iint_{\theta_{k-1}<S< T,S<U}\min\left\{g(\tilde{U}_S),g(\tilde{Q}_S^{\boldsymbol{\lambda}})\right\}p_{\theta_{k-1}}(S,U)\,dS\,dU \right|\mathcal{G}_{\theta_{k-1}}\right] \nonumber\\
=&\ \mathbbm{E} \left[\left.\int_{\theta_{k-1}}^{T}\lambda^{(1)}e^{-\lambda^{(1)}(S-\theta_{k-1})}\min\left\{g(\tilde{U}_S),g(\tilde{Q}_S^{\boldsymbol{\lambda}})\right\}(\int_S^{\infty}\lambda^{(2)}e^{-\lambda^{(2)}(U-\theta_{k-1})}\,dU)\,dS \right|\mathcal{G}_{\theta_{k-1}}\right] \nonumber\\
=&\ \mathbbm{E} \left[\left.\int_{\theta_{k-1}}^{T}\lambda^{(1)}e^{-(\lambda^{(1)}+\lambda^{(2)})(S-\theta_{k-1})}\min\left\{g(\tilde{U}_S),g(\tilde{Q}_S^{\boldsymbol{\lambda}})\right\}\,dS \right|\mathcal{G}_{\theta_{k-1}}\right], \label{eq_3.17_rhs_eq02}
\end{align}
and
\begin{align}
&\mathbbm{E} \left[\left.\max\left\{g(\tilde{L}_{\tilde{T}^{(2)}_{\theta_{k-1}}}),g(\tilde{Q}_{\tilde{T}^{(2)}_{\theta_{k-1}}}^{\boldsymbol{\lambda}})\right\}\mathbbm{1}_{\{\tilde{T}^{(2)}_{\theta_{k-1}}< T,\tilde{T}^{(2)}_{\theta_{k-1}} \leq \tilde{T}^{(1)}_{\theta_{k-1}}\}} \right|\mathcal{G}_{\theta_{k-1}}\right]\nonumber\\
=&\ \mathbbm{E} \left[\left.\int_{\theta_{k-1}}^{T}\lambda^{(2)}e^{-(\lambda^{(1)}+\lambda^{(2)})(U-\theta_{k-1})}\max\left\{g(\tilde{L}_U),g(\tilde{Q}_U^{\boldsymbol{\lambda}})\right\}\,dU \right|\mathcal{G}_{\theta_{k-1}}\right].\label{eq_3.17_rhs_eq03}
\end{align}

It follows from (\ref{proof_recursive_equation_eq01}), (\ref{eq_3.17_rhs_eq01}), (\ref{eq_3.17_rhs_eq02}) and (\ref{eq_3.17_rhs_eq03}) that (\ref{recursiveEq_exponential}) holds for any $k \geq 1$. Hence, $Q_{\theta_{k-1}}^{\boldsymbol{\lambda}}$, which is given by (\ref{final_game_value_general_discounted}), satisfies the recursive equation (\ref{recursiveEq}), for $k\geq 1$.
\end{proof}

As a direct consequence of Lemma \ref{firstLemma}, we deduce that $\hat{Q}_{\theta_{k-1}}^{\boldsymbol{\lambda}}$ defined by
{\begin{equation}
\label{def_Q_hat}
\hat{Q}_{\theta_{k-1}}^{\boldsymbol{\lambda}}:=\tilde{\xi}\mathbbm{1}_{\{\theta_{k-1} \geq T\}}+\bigg(\min\{\tilde{U}_{\theta_{k-1}},\tilde{Q}_{\theta_{k-1}}^{\boldsymbol{\lambda}}\}\mathbbm{1}_{\{\theta_{k-1}\in T^{(1)}\}}+\max\{\tilde{L}_{\theta_{k-1}},\tilde{Q}_{\theta_{k-1}}^{\boldsymbol{\lambda}}\}\mathbbm{1}_{\{\theta_{k-1}\in T^{(2)}\}}\bigg)\mathbbm{1}_{\{\theta_{k-1} < T\}},
\end{equation}}
where $\tilde{Q}_{\theta_{k-1}}^{\boldsymbol{\lambda}}$ is given by (\ref{final_game_value_general_discounted}), satisfies the recursive equation
\begin{multline}
\label{recursive_equ_system}
\hat{Q}_{\theta_{k-1}}^{\boldsymbol{\lambda}} = \tilde{\xi}\mathbbm{1}_{\{\theta_{k-1} \geq T\}}+\bigg(\min\{\tilde{U}_{\theta_{k-1}},\tilde{\mathbbm{E}}[\hat{Q}^{\boldsymbol{\lambda}}_{\theta_k}|\mathcal{G}_{\theta_{k-1}}]\}\mathbbm{1}_{\{\theta_{k-1}\in T^{(1)}\}}\\
+\max\{\tilde{L}_{\theta_{k-1}},\tilde{\mathbbm{E}}[\hat{Q}^{\boldsymbol{\lambda}}_{\theta_k}|\mathcal{G}_{\theta_{k-1}}]\}\mathbbm{1}_{\{\theta_{k-1}\in T^{(2)}\}}\bigg)\mathbbm{1}_{\{\theta_{k-1} < T\}},
\end{multline}
for $k \geq 1$.

We will show that $\hat{Q}_{\theta_{k-1}}^{\boldsymbol{\lambda}}$ in (\ref{def_Q_hat}) is actually the unique solution of the recursive equation (\ref{recursive_equ_system}). The uniqueness is proved by showing that $\hat{Q}_{\theta_{k-1}}^{\boldsymbol{\lambda}}$ is the value of an \emph{auxiliary constrained risk-sensitive Dynkin game} starting from $\theta_{k-1}$, whose upper and lower values are defined by
\begin{eqnarray}
\overline{\hat{q}}_{\theta_{k-1}}^{\boldsymbol{\lambda}}=\essinf_{\sigma\in \tilde{\mathcal{R}}_{\theta_{k-1}}^{(1)}}\esssup_{\tau\in \tilde{\mathcal{R}}_{\theta_{k-1}}^{(2)}}\tilde{\mathbbm{E}}\left[\tilde{R}(\sigma,\tau)|\mathcal{G}_{\theta_{k-1}}\right],\label{aux_upperValues2}\\
\underline{\hat{q}}_{\theta_{k-1}}^{\boldsymbol{\lambda}}=\esssup_{\tau\in \tilde{\mathcal{R}}_{\theta_{k-1}}^{(2)}}\essinf_{\sigma\in \tilde{\mathcal{R}}_{\theta_{k-1}}^{(1)}}\tilde{\mathbbm{E}}\left[\tilde{R}(\sigma,\tau)|\mathcal{G}_{\theta_{k-1}}\right],\label{aux_lowerValues2}
\end{eqnarray}
where the payoff functional $\tilde{R}(\sigma,\tau)$ is given by (\ref{def_payoff_modified}) and the control set $\tilde{\mathcal{R}}^{(i)}_{\theta_{k-1}}$ is given by (\ref{def_control_set_modified}).

The auxiliary game (\ref{aux_upperValues2})-(\ref{aux_lowerValues2}) is said to have value $\hat{q}_{\theta_{k-1}}^{\boldsymbol{\lambda}}$ if $\hat{q}_{\theta_{k-1}}^{\boldsymbol{\lambda}}=\overline{\hat{q}}_{\theta_{k-1}}^{\boldsymbol{\lambda}}=\underline{\hat{q}}_{\theta_{k-1}}^{\boldsymbol{\lambda}}$, and $(\hat{\sigma}^{*}_{k-1},\hat{\tau}^{*}_{k-1})\in\tilde{\mathcal{R}}_{\theta_{k-1}}^{(1)}\times\tilde{\mathcal{R}}_{\theta_{k-1}}^{(2)}$ is called an optimal stopping strategy of the game (\ref{aux_upperValues2})-(\ref{aux_lowerValues2}) if
\[\tilde{\mathbbm{E}}\left[\tilde{R}(\sigma^*_{k-1},\tau)|\mathcal{G}_{\theta_{k-1}}\right] \leq \tilde{\mathbbm{E}}\left[\tilde{R}(\sigma^*_{k-1},\tau^*_{k-1})|\mathcal{G}_{\theta_{k-1}}\right] \leq \tilde{\mathbbm{E}}\left[\tilde{R}(\sigma,\tau^*_{k-1})|\mathcal{G}_{\theta_{k-1}}\right],\]
for every $(\sigma,\tau)\in\tilde{\mathcal{R}}_{\theta_{k-1}}^{(1)}\times\tilde{\mathcal{R}}_{\theta_{k-1}}^{(2)}$.

The difference between (\ref{aux_upperValues2})-(\ref{aux_lowerValues2}) and (\ref{upper_Values_discounted})-(\ref{lower_Values_discounted}) is that the players first make their stopping decisions and then move
forward in the former game, while in the latter game they first move forward and then make their decisions.

\begin{lemma}
\label{secondLemma} Suppose that Assumption \ref{assumption_1} holds. Then, for any $k\geq 1$, the value of the auxiliary constrained risk-sensitive Dynkin game (\ref{aux_upperValues2})-(\ref{aux_lowerValues2}) starting from $\theta_{k-1}$  exists. Its value $\hat{q}_{\theta_{k-1}}^{\boldsymbol{\lambda}}$ is the unique solution of the recursive equation (\ref{recursive_equ_system}). Hence, $\hat{q}^{\boldsymbol{\lambda}}_{\theta_{k-1}}=\hat{Q}^{\boldsymbol{\lambda}}_{\theta_{k-1}}$, where the latter is given by (\ref{def_Q_hat}). The optimal stopping strategy of the auxiliary constrained risk-sensitive Dynkin game (\ref{aux_upperValues2})-(\ref{aux_lowerValues2}) is given by
\begin{equation}
\label{optimalStop_aux}
\left\{\begin{array}{l}
\hat{\sigma}^{*}_{k-1}=\inf\{T^{(1)}_N\geq \theta_{k-1}:\hat{Q}_{T^{(1)}_N}^{\boldsymbol{\lambda}}= \tilde{U}_{T^{(1)}_N}\}\wedge T^{(1)}_{M_1};\\
\hat{\tau}^{*}_{k-1}=\inf\{T^{(2)}_N\geq \theta_{k-1}:\hat{Q}_{T^{(2)}_N}^{\boldsymbol{\lambda}}= \tilde{L}_{T^{(2)}_N}\}\wedge T^{(2)}_{M_2}.
\end{array}\right.
\end{equation}
\end{lemma}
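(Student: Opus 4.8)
The starting point is the fact, recorded just above, that $\hat Q^{\boldsymbol{\lambda}}_{\theta_{k-1}}$ defined by (\ref{def_Q_hat}) solves the recursive equation (\ref{recursive_equ_system}). Since $g$ is strictly increasing with $g(\tilde{\mathbbm{E}}[Y|\mathcal{G}])=\mathbbm{E}[g(Y)|\mathcal{G}]$, $g(\min\{a,b\})=\min\{g(a),g(b)\}$ and $g(\max\{a,b\})=\max\{g(a),g(b)\}$, applying $g$ to (\ref{recursive_equ_system}) turns it into a \emph{linear} dynamic programming equation for the sequence $g(\hat Q^{\boldsymbol{\lambda}}_{\theta_j})$, $j\ge k-1$, along the merged index sequence: at index $j$ the minimizer may stop only on $\{\theta_j\in T^{(1)}\}$ with reward $g(\tilde U_{\theta_j})$, the maximizer only on $\{\theta_j\in T^{(2)}\}$ with reward $g(\tilde L_{\theta_j})$, the game is absorbed with payoff $g(\tilde\xi)$ once $\theta_j\ge T$, and otherwise the play moves to $\theta_{j+1}$. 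The plan is the usual two-sided verification with respect to the candidate strategies $\hat\sigma^{*}_{k-1},\hat\tau^{*}_{k-1}$ in (\ref{optimalStop_aux}): against $\hat\sigma^{*}_{k-1}$ I build a supermartingale along $\tilde{\mathbbm{G}}=(\mathcal{G}_{\theta_j})_j$ bounding the payoff from above, against $\hat\tau^{*}_{k-1}$ a submartingale bounding it from below, and the two bounds sandwich the upper and lower values and identify them with $\hat Q^{\boldsymbol{\lambda}}_{\theta_{k-1}}$.

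For the minimizer's half, fix an arbitrary $\tau\in\tilde{\mathcal{R}}^{(2)}_{\theta_{k-1}}$, write $\hat\sigma=\hat\sigma^{*}_{k-1}$, and set $X_j:=g\big(\hat Q^{\boldsymbol{\lambda}}_{\theta_j\wedge\hat\sigma\wedge\tau}\big)$ for $j\ge k-1$; this is legitimate because $\hat\sigma$ and $\tau$ are a.s.\ attained by the merged sequence (they are arrival times of $T^{(1)}$, resp.\ $T^{(2)}$, following $\theta_{k-1}$ and capped by $T^{(1)}_{M_1}$, resp.\ $T^{(2)}_{M_2}$), so $\theta_j\wedge\hat\sigma\wedge\tau$ is again a $\theta_i$. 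The per-index inequality $\mathbbm{E}[X_{j+1}|\mathcal{G}_{\theta_j}]\le X_j$ follows from the $g$-transformed (\ref{recursive_equ_system}): on $\{\theta_j<\hat\sigma\wedge\tau\}\cap\{\theta_j<T\}$, either $\theta_j\in T^{(1)}$, in which case the definition of $\hat\sigma$ gives $g(\hat Q^{\boldsymbol{\lambda}}_{\theta_j})<g(\tilde U_{\theta_j})$, so in (\ref{recursive_equ_system}) the minimum is attained by the continuation value and $g(\hat Q^{\boldsymbol{\lambda}}_{\theta_j})=\mathbbm{E}[g(\hat Q^{\boldsymbol{\lambda}}_{\theta_{j+1}})|\mathcal{G}_{\theta_j}]$ (a martingale step), or $\theta_j\in T^{(2)}$ and $g(\hat Q^{\boldsymbol{\lambda}}_{\theta_j})=\max\{g(\tilde L_{\theta_j}),\mathbbm{E}[g(\hat Q^{\boldsymbol{\lambda}}_{\theta_{j+1}})|\mathcal{G}_{\theta_j}]\}\ge\mathbbm{E}[g(\hat Q^{\boldsymbol{\lambda}}_{\theta_{j+1}})|\mathcal{G}_{\theta_j}]$ (a supermartingale step); off that set $X_{j+1}=X_j$. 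Because the merged sequence exhausts both Poisson sequences past their caps and $T<\infty$, $(X_j)$ a.s.\ stabilizes at a finite random index with limit $X_\infty=g\big(\hat Q^{\boldsymbol{\lambda}}_{\hat\sigma\wedge\tau}\big)$, and examining the terminal alternatives — $\hat\sigma<\tau\wedge T$ (so $\hat Q^{\boldsymbol{\lambda}}_{\hat\sigma}=\tilde U_{\hat\sigma}$), $\tau\le\hat\sigma$ with $\tau<T$ (so $\hat Q^{\boldsymbol{\lambda}}_{\tau}\ge\tilde L_{\tau}$), or $\hat\sigma\wedge\tau\ge T$ (so $\hat Q^{\boldsymbol{\lambda}}_{\hat\sigma\wedge\tau}=\tilde\xi$) — shows $X_\infty\ge g\big(\tilde R(\hat\sigma,\tau)\big)$. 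The integrability from Assumption \ref{assumption_1} and Proposition \ref{prop_bsde_solvability} dominates all the $X_j$ by a single integrable random variable, so optional sampling gives $g\big(\hat Q^{\boldsymbol{\lambda}}_{\theta_{k-1}}\big)=X_{k-1}\ge\mathbbm{E}[X_\infty|\mathcal{G}_{\theta_{k-1}}]\ge\mathbbm{E}[g(\tilde R(\hat\sigma,\tau))|\mathcal{G}_{\theta_{k-1}}]$, i.e.\ $\hat Q^{\boldsymbol{\lambda}}_{\theta_{k-1}}\ge\tilde{\mathbbm{E}}[\tilde R(\hat\sigma^{*}_{k-1},\tau)|\mathcal{G}_{\theta_{k-1}}]$ for every $\tau$.

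The maximizer's half is the mirror image: for an arbitrary $\sigma\in\tilde{\mathcal{R}}^{(1)}_{\theta_{k-1}}$ and $\hat\tau=\hat\tau^{*}_{k-1}$, the stopped process $g\big(\hat Q^{\boldsymbol{\lambda}}_{\theta_j\wedge\sigma\wedge\hat\tau}\big)$ is a submartingale (a martingale step at $T^{(2)}$-indices before $\hat\tau$, and the minimum in (\ref{recursive_equ_system}) supplies the submartingale inequality at $T^{(1)}$-indices), its terminal value is $\le g(\tilde R(\sigma,\hat\tau))$, and optional sampling gives $\hat Q^{\boldsymbol{\lambda}}_{\theta_{k-1}}\le\tilde{\mathbbm{E}}[\tilde R(\sigma,\hat\tau^{*}_{k-1})|\mathcal{G}_{\theta_{k-1}}]$ for every $\sigma$. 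Combining the two halves,
\[\overline{\hat q}^{\boldsymbol{\lambda}}_{\theta_{k-1}}\le\esssup_{\tau}\tilde{\mathbbm{E}}[\tilde R(\hat\sigma^{*}_{k-1},\tau)|\mathcal{G}_{\theta_{k-1}}]\le\hat Q^{\boldsymbol{\lambda}}_{\theta_{k-1}}\le\essinf_{\sigma}\tilde{\mathbbm{E}}[\tilde R(\sigma,\hat\tau^{*}_{k-1})|\mathcal{G}_{\theta_{k-1}}]\le\underline{\hat q}^{\boldsymbol{\lambda}}_{\theta_{k-1}},\]
and since always $\underline{\hat q}^{\boldsymbol{\lambda}}_{\theta_{k-1}}\le\overline{\hat q}^{\boldsymbol{\lambda}}_{\theta_{k-1}}$, all four quantities coincide; hence the value $\hat q^{\boldsymbol{\lambda}}_{\theta_{k-1}}$ exists and equals $\hat Q^{\boldsymbol{\lambda}}_{\theta_{k-1}}$ given by (\ref{def_Q_hat}). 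Taking $\tau=\hat\tau^{*}_{k-1}$ in the first bound and $\sigma=\hat\sigma^{*}_{k-1}$ in the second yields $\tilde{\mathbbm{E}}[\tilde R(\hat\sigma^{*}_{k-1},\hat\tau^{*}_{k-1})|\mathcal{G}_{\theta_{k-1}}]=\hat q^{\boldsymbol{\lambda}}_{\theta_{k-1}}$, so $(\hat\sigma^{*}_{k-1},\hat\tau^{*}_{k-1})$ satisfies the saddle-point inequalities. Finally, the argument used $\hat Q^{\boldsymbol{\lambda}}$ only through (\ref{recursive_equ_system}) and its integrability, so running the same verification with the strategies read off from any other adapted solution of (\ref{recursive_equ_system}) identifies that solution with the same (uniquely determined) value of the auxiliary game; thus (\ref{recursive_equ_system}) has a unique solution, namely $\hat Q^{\boldsymbol{\lambda}}$.

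I expect the main obstacle to be the careful bookkeeping underlying the (super/sub)martingale verification: checking that $\tilde{\mathbbm{G}}$ is the correct filtration for optional sampling; that the per-index inequalities genuinely follow from (\ref{recursive_equ_system}) in each of the cases $\{\theta_j\in T^{(1)}\}$, $\{\theta_j\in T^{(2)}\}$ and $\{\theta_j\ge T\}$ and at the caps $T^{(i)}_{M_i}$; that the terminal value of the stopped process matches the payoff functional $\tilde R$ (in particular that the tie-breaking convention $\tau\le\sigma$ awarding $\tilde L_\tau$ is respected, which holds a.s.\ since $T^{(1)}$ and $T^{(2)}$ have no common arrival times); and that the integrability hypotheses of Assumption \ref{assumption_1} and Proposition \ref{prop_bsde_solvability} really dominate $\sup_j|X_j|$, so that the interchange of limit and conditional expectation is valid both when $T$ is bounded and when it is unbounded.
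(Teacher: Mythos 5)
Your proposal is correct and follows essentially the same route as the paper: a verification argument showing that the candidate value process stopped at $(\hat\sigma^{*}_{k-1},\tau)$ (resp.\ $(\sigma,\hat\tau^{*}_{k-1})$) is a supermartingale (resp.\ submartingale) along the merged filtration $\tilde{\mathbbm{G}}$, with the resulting bounds sandwiching $\overline{\hat q}^{\boldsymbol{\lambda}}_{\theta_{k-1}}$ and $\underline{\hat q}^{\boldsymbol{\lambda}}_{\theta_{k-1}}$ and uniqueness obtained by identifying any solution of (\ref{recursive_equ_system}) with the game value. The only (cosmetic) difference is that you apply $g$ and work with $g(\hat{Q}^{\boldsymbol{\lambda}})$ under the linear expectation $\mathbbm{E}$, whereas the paper runs the same computation directly under the nonlinear expectation $\tilde{\mathbbm{E}}$; the two formulations are equivalent by monotonicity of $g$.
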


\begin{proof}
\noindent\emph{Step 1.} Let $\hat{Q}_{\theta_{k-1}}^{\boldsymbol{\lambda}}$ be a solution of the recursive equation (\ref{recursive_equ_system}) for $k\geq 1$. We claim the following martingale properties hold:

%
%

\noindent(i) $\left(\hat{Q}_{\theta_{m}\wedge
\hat{\sigma}^*_{k-1}\wedge
\hat{\tau}^*_{k-1}}^{\boldsymbol{\lambda}}\right)_{m \geq k-1}$ is a
$\tilde{\mathbbm{G}}$-martingale under the nonlinear expectation $\tilde{\mathbbm{E}}$;

\noindent(ii) $\left(\hat{Q}_{\theta_{m}\wedge
\hat{\sigma}^*_{k-1}\wedge
\tau}^{\boldsymbol{\lambda}}\right)_{m \geq k-1}$ is a $\tilde{\mathbbm{G}}$-supermartingale under $\tilde{\mathbbm{E}}$, for any
$\tau\in\tilde{\mathcal{R}}_{\theta_{k-1}}^{(2)}$;

\noindent(iii) $\left(\hat{Q}_{\theta_{m}\wedge
\sigma\wedge
\hat{\tau}^*_{k-1}}^{\boldsymbol{\lambda}}\right)_{m \geq k-1}$ is a
$\tilde{\mathbbm{G}}$-submartingale under $\tilde{\mathbbm{E}}$, for any
$\sigma\in\tilde{\mathcal{R}}_{\theta_{k-1}}^{(1)}$.

If the martingale property (i) holds, then, for $k \geq 1$,
\[\hat{Q}_{\theta_{k-1}}^{\boldsymbol{\lambda}}=\ \hat{Q}_{\theta_{k-1}\wedge
\hat{\sigma}^*_{k-1}\wedge
\hat{\tau}^*_{k-1}}^{\boldsymbol{\lambda}}=\tilde{\mathbbm{E}}\left[\hat{Q}_{\hat{\sigma}^*_{k-1}\wedge
\hat{\tau}^*_{k-1}}^{\boldsymbol{\lambda}}|\mathcal{G}_{\theta_{k-1}}\right],\]
and the definition of $(\hat{\sigma}^*_{k-1},\hat{\tau}^*_{k-1})$ in (\ref{optimalStop_aux}) further yields that
\begin{align}
\hat{Q}_{\theta_{k-1}}^{\boldsymbol{\lambda}}
=&\ \tilde{\mathbbm{E}}\bigg[\tilde{\xi}\mathbbm{1}_{\{\hat{\sigma}^*_{k-1}\wedge
\hat{\tau}^*_{k-1}  \geq T\}}+\hat{Q}_{\hat{\tau}^*_{k-1}}^{\boldsymbol{\lambda}}\mathbbm{1}_{\{\hat{\tau}^*_{k-1} < T,\hat{\tau}^*_{k-1} \leq \hat{\sigma}^*_{k-1}\}}+\hat{Q}_{\hat{\sigma}^*_{k-1}}^{\boldsymbol{\lambda}}\mathbbm{1}_{\{\hat{\sigma}^*_{k-1}< T,\hat{\sigma}^*_{k-1} < \hat{\tau}^*_{k-1} \}}|\mathcal{G}_{\theta_{k-1}}\bigg]\nonumber\\
=&\ \tilde{\mathbbm{E}}\bigg[\tilde{\xi}\mathbbm{1}_{\{\hat{\sigma}^*_{k-1}\wedge
\hat{\tau}^*_{k-1}  \geq T\}}+\tilde{L}_{\hat{\tau}^*_{k-1}}\mathbbm{1}_{\{\hat{\tau}^*_{k-1} < T,\hat{\tau}^*_{k-1} \leq \hat{\sigma}^*_{k-1}\}}+\tilde{U}_{\hat{\sigma}^*_{k-1}}\mathbbm{1}_{\{\hat{\sigma}^*_{k-1}< T,
\hat{\sigma}^*_{k-1} < \hat{\tau}^*_{k-1} \}}|\mathcal{G}_{\theta_{k-1}}\bigg]\nonumber\\
=&\ \tilde{\mathbbm{E}}\bigg[\tilde{R}(\hat{\sigma}^*_{k-1},\hat{\tau}^*_{k-1})|\mathcal{G}_{\theta_{k-1}}\bigg].\label{main_proof_step1_eq01}
\end{align}

Using the similar arguments, if the supermartingale property (ii) and the submartingale property (iii) hold, then we have, for any $\tau\in\tilde{\mathcal{R}}_{\theta_{k-1}}^{(2)}$,
\begin{align}
\hat{Q}_{\theta_{k-1}}^{\boldsymbol{\lambda}}
 \geq &\ \tilde{\mathbbm{E}}\bigg[\hat{Q}_{\hat{\sigma}^*_{k-1}\wedge
\tau}^{\boldsymbol{\lambda}}|\mathcal{G}_{\theta_{k-1}}\bigg]\nonumber\\
=&\ \tilde{\mathbbm{E}}\bigg[\tilde{\xi}\mathbbm{1}_{\{\hat{\sigma}^*_{k-1}\wedge
\tau  \geq T\}}+\hat{Q}_{\tau}^{\boldsymbol{\lambda}}\mathbbm{1}_{\{\tau < T,\tau \leq \hat{\sigma}^*_{k-1}\}}+\hat{Q}_{\hat{\sigma}^*_{k-1}}^{\boldsymbol{\lambda}}\mathbbm{1}_{\{\hat{\sigma}^*_{k-1}< T,\hat{\sigma}^*_{k-1} < \tau \}}|\mathcal{G}_{\theta_{k-1}}\bigg]\nonumber\\
\geq &\ \tilde{\mathbbm{E}}\bigg[\tilde{\xi}\mathbbm{1}_{\{\hat{\sigma}^*_{k-1}\wedge
\tau  \geq T\}}+\tilde{L}_{\tau}\mathbbm{1}_{\{\tau < T,\tau \leq \hat{\sigma}^*_{k-1}\}}+\tilde{U}_{\hat{\sigma}^*_{k-1}}\mathbbm{1}_{\{\hat{\sigma}^*_{k-1}< T,\hat{\sigma}^*_{k-1} < \tau \}}|\mathcal{G}_{\theta_{k-1}}\bigg]\nonumber\\
=&\ \tilde{\mathbbm{E}}\bigg[\tilde{R}(\hat{\sigma}^*_{k-1},\tau)|\mathcal{G}_{\theta_{k-1}}\bigg],\label{main_proof_step1_eq02}
\end{align}
and, for any $\sigma\in\tilde{\mathcal{R}}_{\theta_{k-1}}^{(1)}$,
\begin{equation}
\label{main_proof_step1_eq03}
\hat{Q}_{\theta_{k-1}}^{\boldsymbol{\lambda}} \leq \tilde{\mathbbm{E}}\bigg[\tilde{R}(\sigma,\hat{\tau}^*_{k-1})|\mathcal{G}_{\theta_{k-1}}\bigg].
\end{equation}

It follows from (\ref{main_proof_step1_eq02}) and (\ref{main_proof_step1_eq03}) that
\[\hat{Q}_{\theta_{k-1}}^{\boldsymbol{\lambda}} \geq \esssup_{\tau\in\tilde{\mathcal{R}}_{\theta_{k-1}}^{(2)}}\tilde{\mathbbm{E}}\bigg[\tilde{R}(\hat{\sigma}^*_{k-1},\tau)|\mathcal{G}_{\theta_{k-1}}\bigg]\geq \essinf_{\sigma\in\tilde{\mathcal{R}}_{\theta_{k-1}}^{(1)}}\esssup_{\tau\in\tilde{\mathcal{R}}_{\theta_{k-1}}^{(2)}}\tilde{\mathbbm{E}}\bigg[\tilde{R}(\sigma,\tau)|\mathcal{G}_{\theta_{k-1}}\bigg] =\overline{\hat{q}}_{\theta_{k-1}}^{\boldsymbol{\lambda}},\]
and
\[\hat{Q}_{\theta_{k-1}}^{\boldsymbol{\lambda}} \leq\essinf_{\sigma\in\tilde{\mathcal{R}}_{\theta_{k-1}}^{(1)}}\tilde{\mathbbm{E}}\bigg[\tilde{R}(\sigma,\hat{\tau}^*_{k-1})|\mathcal{G}_{\theta_{k-1}}\bigg]\leq \esssup_{\tau\in\tilde{\mathcal{R}}_{\theta_{k-1}}^{(2)}}\essinf_{\sigma\in\tilde{\mathcal{R}}_{\theta_{k-1}}^{(1)}}\tilde{\mathbbm{E}}\bigg[\tilde{R}(\sigma,\tau)|\mathcal{G}_{\theta_{k-1}}\bigg] = \underline{\hat{q}}_{\theta_{k-1}}^{\boldsymbol{\lambda}}.\]
It is clear that $\overline{\hat{q}}_{\theta_{k-1}}^{\boldsymbol{\lambda}} \geq \underline{\hat{q}}_{\theta_{k-1}}^{\boldsymbol{\lambda}}$, and therefore the value of the auxiliary constrained risk-sensitive Dynkin game (\ref{aux_upperValues2})-(\ref{aux_lowerValues2}) exists, i.e.
\[\hat{Q}_{\theta_{k-1}}^{\boldsymbol{\lambda}}=\hat{q}_{\theta_{k-1}}^{\boldsymbol{\lambda}}=\overline{\hat{q}}_{\theta_{k-1}}^{\boldsymbol{\lambda}}=\underline{\hat{q}}_{\theta_{k-1}}^{\boldsymbol{\lambda}}.\]
This also implies the recursive equation (\ref{recursive_equ_system}) admits a unique solution. Furthermore, since $\hat{Q}_{\theta_{k-1}}^{\boldsymbol{\lambda}}$ given by (\ref{def_Q_hat}) satisfies the recursive equation (\ref{recursive_equ_system}), it is actually the unique solution of (\ref{recursive_equ_system}). As a direct consequence of (\ref{main_proof_step1_eq01})-(\ref{main_proof_step1_eq03}), we can obtain that $(\hat{\sigma}^*_{k-1},\hat{\tau}^*_{k-1})$, which is given by (\ref{optimalStop_aux}), is indeed an optimal stopping strategy of the auxiliary constrained risk-sensitive Dynkin game (\ref{aux_upperValues2})-(\ref{aux_lowerValues2}).

\noindent\emph{Step 2.} It remains to prove the martingale property (i), the supermartingale property (ii) and the submartingale property (iii) in Step 1.

Indeed, for $m\geq k-1$, we have
\begin{align*}
\tilde{\mathbbm{E}}\left[\left.\hat{Q}_{\theta_{m+1}\wedge\hat{\sigma}^*_{k-1}\wedge\hat{\tau}^*_{k-1}}^{\boldsymbol{\lambda}}\right|\mathcal{G}_{\theta_m}\right] = &\ \tilde{\mathbbm{E}}\bigg[\mathbbm{1}_{\{\hat{\sigma}^*_{k-1}\wedge \hat{\tau}^*_{k-1} \leq \theta_m\}} \hat{Q}_{\hat{\sigma}^*_{k-1}\wedge\hat{\tau}^*_{k-1}}^{\boldsymbol{\lambda}}+\mathbbm{1}_{\{\hat{\sigma}^*_{k-1}\wedge \hat{\tau}^*_{k-1} \geq \theta_{m+1}\}} \hat{Q}_{\theta_{m+1}}^{\boldsymbol{\lambda}}|\mathcal{G}_{\theta_m}\bigg]\\
= &\ \mathbbm{1}_{\{\hat{\sigma}^*_{k-1}\wedge \hat{\tau}^*_{k-1} \leq \theta_m\}} \hat{Q}_{\hat{\sigma}^*_{k-1}\wedge\hat{\tau}^*_{k-1}}^{\boldsymbol{\lambda}} +\mathbbm{1}_{\{\hat{\sigma}^*_{k-1}\wedge \hat{\tau}^*_{k-1} \geq \theta_{m+1}\}} \tilde{\mathbbm{E}}[\hat{Q}_{\theta_{m+1}}^{\boldsymbol{\lambda}} |\mathcal{G}_{\theta_m}]\\
= &\ \mathbbm{1}_{\{\hat{\sigma}^*_{k-1}\wedge \hat{\tau}^*_{k-1} \leq \theta_m\}} \hat{Q}_{\hat{\sigma}^*_{k-1}\wedge\hat{\tau}^*_{k-1}}^{\boldsymbol{\lambda}} +\mathbbm{1}_{\{\hat{\sigma}^*_{k-1}\wedge \hat{\tau}^*_{k-1} \geq \theta_{m+1}\}} \hat{Q}_{\theta_{m}}^{\boldsymbol{\lambda}}\\
= &\ \hat{Q}_{\theta_{m}\wedge\hat{\sigma}^*_{k-1}\wedge\hat{\tau}^*_{k-1}}^{\boldsymbol{\lambda}}
\end{align*}
where the second last equality follows from the definition (\ref{optimalStop_aux}) of $(\hat{\sigma}^*_{k-1},\hat{\tau}^*_{k-1})$, and thus the martingale property (i) has been proved.

To prove the supermartingale property (ii), for any $\tau\in\tilde{\mathcal{R}}_{\theta_{k-1}}^{(2)}$, we have
\[\tilde{\mathbbm{E}}\left[\left.\hat{Q}_{\theta_{m+1}\wedge\hat{\sigma}^*_{k-1}\wedge\tau}^{\boldsymbol{\lambda}}\right|\mathcal{G}_{\theta_m}\right] = \mathbbm{1}_{\{\hat{\sigma}^*_{k-1}\wedge \tau \leq \theta_m\}} \hat{Q}_{\hat{\sigma}^*_{k-1}\wedge\tau}^{\boldsymbol{\lambda}} +\mathbbm{1}_{\{\hat{\sigma}^*_{k-1}\wedge \tau \geq \theta_{m+1}\}} \tilde{\mathbbm{E}}\bigg[\hat{Q}_{\theta_{m+1}}^{\boldsymbol{\lambda}}|\mathcal{G}_{\theta_m}\bigg].\]
Conditional on the set $\{\hat{\sigma}^*_{k-1}\wedge \tau \geq \theta_{m+1}\} \cap \{\theta_m<T\}$, we have
\begin{align*}
\hat{Q}_{\theta_{m}}^{\boldsymbol{\lambda}}=&\ \tilde{\mathbbm{E}}\left[\left.\hat{Q}^{\boldsymbol{\lambda}}_{\theta_{m+1}}\right|\mathcal{G}_{\theta_{m}}\right]\mathbbm{1}_{\{\theta_{m}\in T^{(1)}\}}+\max\left\{\tilde{L}_{\theta_{m}},\tilde{\mathbbm{E}}\left[\left.\hat{Q}^{\boldsymbol{\lambda}}_{\theta_{m+1}}\right|\mathcal{G}_{\theta_{m}}\right]\right\}\mathbbm{1}_{\{\theta_{m}\in T^{(2)}\}}\\
\geq &\ \tilde{\mathbbm{E}}\left[\left.\hat{Q}^{\boldsymbol{\lambda}}_{\theta_{m+1}}\right|\mathcal{G}_{\theta_{m}}\right],
\end{align*}
and thus
\begin{align*}
\tilde{\mathbbm{E}}\left[\left.\hat{Q}_{\theta_{m+1}\wedge\hat{\sigma}^*_{k-1}\wedge\tau}^{\boldsymbol{\lambda}}\right|\mathcal{G}_{\theta_m}\right]\leq &\ \mathbbm{1}_{\{\hat{\sigma}^*_{k-1}\wedge \tau \leq \theta_m\}} \hat{Q}_{\hat{\sigma}^*_{k-1}\wedge\tau}^{\boldsymbol{\lambda}}+\mathbbm{1}_{\{\hat{\sigma}^*_{k-1}\wedge \tau \geq \theta_{m+1}\}} \left(\tilde{\xi}\mathbbm{1}_{\{\theta_{m}  \geq T\}}+\hat{Q}_{\theta_{m}}^{\boldsymbol{\lambda}}\mathbbm{1}_{\{\theta_{m} < T\}}\right)\\
=&\ \hat{Q}_{\theta_{m}\wedge\hat{\sigma}^*_{k-1}\wedge\tau}^{\boldsymbol{\lambda}},
\end{align*}
which proves the supermartingale property (ii). Likewise, the submartingale property (iii) can be proved in a similar way, and the proof of this lemma is thus completed.
\end{proof}

We are now in a position to prove Theorem \ref{bigTheorem}. Let $\tilde{Q}_{\theta_{k-1}}^{\boldsymbol{\lambda}}$ be a solution of the recursive equation (\ref{recursiveEq}), and in turn,
\begin{align*}
\tilde{Q}_{\theta_{k-1}}^{\boldsymbol{\lambda}}=&\ \tilde{\mathbbm{E}}\left[\left.\tilde{\xi}\mathbbm{1}_{\{\theta_k  \geq T\}}+\hat{Q}_{\theta_k}^{\boldsymbol{\lambda}}\mathbbm{1}_{\{\theta_k < T\}}\right|\mathcal{G}_{\theta_{k-1}}\right]\\
=&\ \tilde{\mathbbm{E}}\left[\left.\tilde{\xi}\mathbbm{1}_{\{\theta_k  \geq T\}}+\tilde{\mathbbm{E}}\bigg[\tilde{R}(\hat{\sigma}^*_{k},\hat{\tau}^*_{k})|\mathcal{G}_{\theta_k}\bigg]\mathbbm{1}_{\{\theta_k < T\}}\right|\mathcal{G}_{\theta_{k-1}}\right]\\
=&\ \tilde{\mathbbm{E}}\left[\left.\tilde{\mathbbm{E}}\bigg[\tilde{\xi}\mathbbm{1}_{\{\theta_k  \geq T\}}+\tilde{R}(\hat{\sigma}^*_{k},\hat{\tau}^*_{k})\mathbbm{1}_{\{\theta_k < T\}}|\mathcal{G}_{\theta_k}\bigg]\right|\mathcal{G}_{\theta_{k-1}}\right]\\
=&\  \tilde{\mathbbm{E}}\bigg[\tilde{\xi}\left(\mathbbm{1}_{\{\theta_k  \geq T\}}+\mathbbm{1}_{\{\hat{\sigma}^*_{k}\wedge\hat{\tau}^*_{k} \geq T\}}\mathbbm{1}_{\{\theta_k < T\}}\right)+\tilde{L}_{\hat{\tau}^*_{k}}\mathbbm{1}_{\{\hat{\tau}^*_{k} < T,\hat{\tau}^*_{k} \leq \hat{\sigma}^*_{k}\}}\mathbbm{1}_{\{\theta_k < T\}}\\
&+\tilde{U}_{\hat{\sigma}^*_{k}}\mathbbm{1}_{\{\hat{\sigma}^*_{k}< T,\hat{\sigma}^*_{k} < \hat{\tau}^*_{k} \}}\mathbbm{1}_{\{\theta_k < T\}}|\mathcal{G}_{\theta_{k-1}}\bigg].
\end{align*}
Using the relationship $\{\theta_k  \geq T\} \subseteq \{\hat{\sigma}^*_{k}\wedge\hat{\tau}^*_{k} \geq T\}$, $\{\hat{\tau}^*_{k} < T,\hat{\tau}^*_{k} \leq \hat{\sigma}^*_{k}\} \subseteq \{\theta_k < T\}$ and $\{\hat{\sigma}^*_{k}< T,\hat{\sigma}^*_{k} < \hat{\tau}^*_{k} \} \subseteq \{\theta_k < T\}$, we can further obtain that
\begin{align}
\tilde{Q}_{\theta_{k-1}}^{\boldsymbol{\lambda}}=&\ \tilde{\mathbbm{E}}\bigg[\tilde{\xi}\mathbbm{1}_{\{\hat{\sigma}^*_{k}\wedge\hat{\tau}^*_{k} \geq T\}}+\tilde{L}_{\hat{\tau}^*_{k}}\mathbbm{1}_{\{\hat{\tau}^*_{k} < T,\hat{\tau}^*_{k} \leq \hat{\sigma}^*_{k}\}}+\tilde{U}_{\hat{\sigma}^*_{k}}\mathbbm{1}_{\{\hat{\sigma}^*_{k}< T,\hat{\sigma}^*_{k} < \hat{\tau}^*_{k} \}}|\mathcal{G}_{\theta_{k-1}}\bigg]\nonumber\\
=&\ \tilde{\mathbbm{E}}\bigg[\tilde{R}(\hat{\sigma}^*_{k},\hat{\tau}^*_{k})|\mathcal{G}_{\theta_{k-1}}\bigg],\label{main_proof_general_game_martingale_eq01}
\end{align}
where $(\hat{\sigma}^*_{k},\hat{\tau}^*_{k})$ is the optimal stopping strategy of the auxiliary constrained risk-sensitive Dynkin game starting from $\theta_k$ given in (\ref{optimalStop_aux}). Similarly, we can obtain that, for any $\tau\in\tilde{\mathcal{R}}_{\theta_{k}}^{(2)}$,
\begin{equation}
\label{main_proof_general_game_martingale_eq02}
\tilde{Q}_{\theta_{k-1}}^{\boldsymbol{\lambda}} \geq \tilde{\mathbbm{E}}\bigg[\tilde{R}(\hat{\sigma}^*_{k},\tau)|\mathcal{G}_{\theta_{k-1}}\bigg],
\end{equation}
and, for any $\sigma\in\tilde{\mathcal{R}}_{\theta_{k}}^{(1)}$,
\begin{equation}
\label{main_proof_general_game_martingale_eq03}
\tilde{Q}_{\theta_{k-1}}^{\boldsymbol{\lambda}} \leq \tilde{\mathbbm{E}}\bigg[\tilde{R}(\sigma,\hat{\tau}^*_{k})|\mathcal{G}_{\theta_{k-1}}\bigg].
\end{equation}

It follows from (\ref{main_proof_general_game_martingale_eq02}) and (\ref{main_proof_general_game_martingale_eq03}) that
\[\tilde{Q}_{\theta_{k-1}}^{\boldsymbol{\lambda}} \geq \esssup_{\tau\in\tilde{\mathcal{R}}_{\theta_{k}}^{(2)}}\tilde{\mathbbm{E}}\bigg[\tilde{R}(\hat{\sigma}^*_{k},\tau)|\mathcal{G}_{\theta_{k-1}}\bigg]\geq \essinf_{\sigma\in\tilde{\mathcal{R}}_{\theta_{k}}^{(1)}}\esssup_{\tau\in\tilde{\mathcal{R}}_{\theta_{k}}^{(2)}}\tilde{\mathbbm{E}}\bigg[\tilde{R}(\sigma,\tau)|\mathcal{G}_{\theta_{k-1}}\bigg] =\overline{q}_{\theta_{k-1}}^{\boldsymbol{\lambda}},\]
and
\[\tilde{Q}_{\theta_{k-1}}^{\boldsymbol{\lambda}} \leq \essinf_{\sigma\in\tilde{\mathcal{R}}_{\theta_{k}}^{(1)}}\tilde{\mathbbm{E}}\bigg[\tilde{R}(\sigma,\hat{\tau}^*_{k})|\mathcal{G}_{\theta_{k-1}}\bigg]\leq \esssup_{\tau\in\tilde{\mathcal{R}}_{\theta_{k}}^{(2)}}\essinf_{\sigma\in\tilde{\mathcal{R}}_{\theta_{k}}^{(1)}}\tilde{\mathbbm{E}}\bigg[\tilde{R}(\sigma,\tau)|\mathcal{G}_{\theta_{k-1}}\bigg] = \underline{q}_{\theta_{k-1}}^{\boldsymbol{\lambda}}.\]

It is clear that $\overline{q}_{\theta_{k-1}}^{\boldsymbol{\lambda}} \geq \underline{q}_{\theta_{k-1}}^{\boldsymbol{\lambda}}$, and therefore the value of the constrained risk-sensitive Dynkin game starting from $\theta_{k-1}$ (\ref{upper_Values_discounted})-(\ref{lower_Values_discounted}) exists, i.e.
\[\tilde{Q}_{\theta_{k-1}}^{\boldsymbol{\lambda}}=q_{\theta_{k-1}}^{\boldsymbol{\lambda}}=\overline{q}_{\theta_{k-1}}^{\boldsymbol{\lambda}}=\underline{q}_{\theta_{k-1}}^{\boldsymbol{\lambda}}.\]
This also implies the recursive equation (\ref{recursiveEq}) admits a unique solution. Furthermore, since $\tilde{Q}_{\theta_{k-1}}^{\boldsymbol{\lambda}}$ given by (\ref{final_game_value_general_discounted}) satisfies the recursive equation (\ref{recursiveEq}), it is actually the unique solution of (\ref{recursiveEq}). As a direct consequence of (\ref{main_proof_general_game_martingale_eq01})-(\ref{main_proof_general_game_martingale_eq03}), we can obtain that $(\hat{\sigma}^*_{k},\hat{\tau}^*_{k})$, which is given by (\ref{optimalStop_aux}), is indeed an optimal stopping strategy of the constrained risk-sensitive Dynkin game (\ref{upper_Values_discounted})-(\ref{lower_Values_discounted}).

We conclude the proof by proving $(\hat{\sigma}^*_{k},\hat{\tau}^*_{k})$ are actually $(\sigma^*_{k},\tau^*_{k})$ in (\ref{optimalStop_general}). Indeed,
\begin{eqnarray*}
\hat{\sigma}^*_{k}&=&\inf\{T^{(1)}_N\geq \theta_{k}:\hat{Q}_{T^{(1)}_N}^{\boldsymbol{\lambda}}= \tilde{U}_{T^{(1)}_N}\}\wedge T^{(1)}_{M_1}\\
&=&\inf\{T^{(1)}_N\geq \theta_{k}:\tilde{Q}_{T^{(1)}_N}^{\boldsymbol{\lambda}} \geq \tilde{U}_{T^{(1)}_N}\}\wedge T^{(1)}_{M_1}=\sigma^*_k,
\end{eqnarray*}
and, similarly, $\hat{\tau}^*_{k} = \tau^*_{k}$.

\section{Connection with stochastic differential games via randomized stopping}
In this section, we connect constrained risk-sensitive Dynkin games
with a class of stochastic differential games via randomized stopping first introduced by Krylov (see \cite{Krylov}).
In particular, we generalize the optimal control representation of constrained
optimal stopping problems in \cite{liang2015stochastic} (see section
4 therein).

Let us introduce the basic idea of randomized stopping in a two-player setting as follows. Consider a nonnegative control process $(a_t)_{t\geq 0}$ (resp. $(b_t)_{t\geq 0}$), and let Player I (resp. II) stop with probability $a_t\Delta$ (resp. $b_t\Delta$) in an infinitesimal interval $(t,t+\Delta)$. Then the probability that Player I (resp. II) does not stop before time $t$ is
\[e^{-\int_0^t a_u\,du}~~\left(\mbox{resp. }e^{-\int_0^t b_u\,du}\right),\]
and the probability that both players do not stop before time $t$ and Player I (resp. II) does stop in the infinitesimal interval $(t,t+\Delta)$ is
\[e^{-\int_0^t (a_u+b_u)\,du}a_t\Delta~~\left(\mbox{resp. }e^{-\int_0^t (a_u+b_u)\,du}b_t\Delta\right).\]

Recall that $T$ is a finite $\mathbbm{F}$-stopping time representing the (random) terminal time of the game, and $r>0$ represents the discount rate. The discounted payoff is assumed to be $e^{-rt}\overline{U}_t$ if Player I stops firstly
at time $t<T$, $e^{-rt}\overline{L}_t$ if Player II stops firstly
at time $t<T$, and $e^{-rT}\overline{\xi}$ if neither players stop in the time interval $[0,T]$, where the auxiliary payoff processes $\overline{U}$, $\overline{L}$ and $\overline{\xi}$ are given in (\ref{final_game_value_parameters_U}), (\ref{final_game_value_parameters_L}), and (\ref{final_game_value_parameters_xi}), respectively. Thus, the discounted payoff functional associated with the control processes $a$ and $b$ is given by
\[J(a,b) = \int_0^T e^{-\int_0^t (a_u+b_u+r)\,du}\left(a_t\overline{U}_t+b_t\overline{L}_t\right)\,dt+e^{-\int_0^T (a_u+b_u+r)\,du}\overline{\xi},\]
or in terms of the original processes $L$, $U$ and $\xi$,
\begin{align*}
J(a,b) =&\ \int_0^T e^{-\int_0^t (a_u+b_u)\,du}\left[a_tg(e^{-rt}U_t+\int_0^te^{-ru}f_udu)+b_tg(e^{-rt}L_t+\int_0^te^{-ru}f_udu)\right]\\
&+e^{-\int_0^T (a_u+b_u)\,du}g(e^{-rT}\xi+\int_0^Te^{-ru}f_udu).
\end{align*}

Let us define the control set $\mathcal{A}(\lambda^{(1)})$ (resp. $\mathcal{B}(\lambda^{(2)})$) for Player I (resp. II) as
\[\mathcal{A}(\lambda^{(1)}) = \{\mathbbm{F}\mbox{-adapted process }(a_t)_{t \geq 0}:a_t=0\mbox{ or }\lambda^{(1)}\}\]
(resp.
\[\mathcal{B}(\lambda^{(2)}) = \{\mathbbm{F}\mbox{-adapted process }(b_t)_{t \geq 0}:b_t=0\mbox{ or }\lambda^{(2)}\}),\]
and the upper and lower values of the stochastic differential game as
\begin{equation}
\label{upper_lower_Values_SDG}
\overline{v}^{\boldsymbol{\lambda},SDG}=\inf_{a\in \mathcal{A}(\lambda^{(1)})}\sup_{b\in\mathcal{B}(\lambda^{(2)})}g^{-1}\left(\mathbbm{E}[J(a,b)]\right),~~\underline{v}^{\boldsymbol{\lambda},SDG}=\sup_{b\in\mathcal{B}(\lambda^{(2)})}\inf_{a\in \mathcal{A}(\lambda^{(1)})}g^{-1}\left(\mathbbm{E}[J(a,b)]\right),
\end{equation}
where $g^{-1}$ is the inverse function of the risk-sensitive function $g$. The game (\ref{upper_lower_Values_SDG}) is said to have value $v^{\boldsymbol{\lambda},SDG}$ if $v^{\boldsymbol{\lambda},SDG}=\overline{v}^{\boldsymbol{\lambda},SDG}=\underline{v}^{\boldsymbol{\lambda},SDG}$, and $(a^*,b^*)\in \mathcal{A}(\lambda^{(1)}) \times \mathcal{B}(\lambda^{(2)})$ is said to be an optimal pair of controls if $v^{\boldsymbol{\lambda},SDG}=g^{-1}\left(\mathbbm{E}[J(a^*,b^*)]\right)$.

We are now in a position to present the main result of this section.

\begin{proposition}
Suppose that Assumption \ref{assumption_1} holds. Let $(\overline{Q}^{\boldsymbol{\lambda}},\overline{Z}^{\boldsymbol{\lambda}})$ be the unique solution to BSDE (\ref{final_game_value}). Then, the value of the stochastic differential game (\ref{upper_lower_Values_SDG}) exists and equals the value $v^{\boldsymbol{\lambda}}$ of the constrained risk-sensitive Dynkin game (\ref{upper_lowerValues}), i.e.
\begin{equation}
\label{prop_connection_SDG_eq01}
v^{\boldsymbol{\lambda},SDG}=\overline{v}^{\boldsymbol{\lambda},SDG}=\underline{v}^{\boldsymbol{\lambda},SDG}=v^{\boldsymbol{\lambda}}=g^{-1}\left(\overline{Q}^{\boldsymbol{\lambda}}_0\right).
\end{equation}
Moreover, the optimal pair of controls is given by
\begin{equation}
\label{prop_connection_SDG_eq02}
a_t^*=\lambda^{(1)}\mathbbm{1}_{\{\overline{Q}_t^{\boldsymbol{\lambda}} \geq \overline{U}_t\}},~~b_t^*=\lambda^{(2)}\mathbbm{1}_{\{\overline{Q}_t^{\boldsymbol{\lambda}} \leq \overline{L}_t\}}
\end{equation}
for $t \geq 0$.
\end{proposition}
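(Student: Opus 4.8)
The plan is to reduce matters to the \emph{linear} stochastic game with running payoff $\mathbbm{E}[J(a,b)]$: since $g^{-1}$ is strictly increasing, the two-sided bounds I will prove on $\mathbbm{E}[J(a,b)]$ translate, pointwise in $(a,b)$ via $g^{-1}$, into the corresponding bounds on the values in (\ref{upper_lower_Values_SDG}); and, by Theorem \ref{bigTheorem} at $t=0$, $v^{\boldsymbol{\lambda}}=Q_0^{\boldsymbol{\lambda}}=g^{-1}(\overline{Q}_0^{\boldsymbol{\lambda}})$. First I would linearise the payoff. For fixed admissible $(a,b)\in\mathcal{A}(\lambda^{(1)})\times\mathcal{B}(\lambda^{(2)})$, set
$$Y_t^{a,b}=\mathbbm{E}\left[\int_t^T e^{-\int_t^s(a_u+b_u+r)\,du}\bigl(a_s\overline{U}_s+b_s\overline{L}_s\bigr)\,ds+e^{-\int_t^T(a_u+b_u+r)\,du}\overline{\xi}\,\bigg|\,\mathcal{F}_t\right],$$
so that $\mathbbm{E}[J(a,b)]=Y_0^{a,b}$ and $(Y^{a,b},Z^{a,b})$ is the unique solution of the linear BSDE on $[0,T]$ with terminal datum $\overline{\xi}$ and ($z$-independent, Lipschitz) generator $f^{a,b}(s,y)=a_s\overline{U}_s+b_s\overline{L}_s-(a_s+b_s+r)y=a_s(\overline{U}_s-y)+b_s(\overline{L}_s-y)-ry$. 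Under Assumption \ref{assumption_1} this BSDE is well posed, with $Y^{a,b}$ bounded and $Z^{a,b}\in\mathbbm{H}^{2,d}_{-r}$ when $T$ is unbounded and $(Y^{a,b},Z^{a,b})\in\mathbbm{S}^{2,1}\times\mathbbm{H}^{2,d}$ when $T$ is bounded, the effective discount rate $a_t+b_t+r$ being bounded and bounded below by $r>0$.

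The algebraic heart of the proof is that the driver of BSDE (\ref{final_game_value}), $F(s,y)=-\lambda^{(1)}(y-\overline{U}_s)^++\lambda^{(2)}(\overline{L}_s-y)^+-ry$, is the min--max of the family $\{f^{a,b}\}$: because $a\mapsto a(\overline{U}_s-y)$ is minimised over $\{0,\lambda^{(1)}\}$ and $b\mapsto b(\overline{L}_s-y)$ maximised over $\{0,\lambda^{(2)}\}$ independently,
$$F(s,y)=\inf_{a\in\{0,\lambda^{(1)}\}}\sup_{b\in\{0,\lambda^{(2)}\}}f^{a,b}(s,y)=\sup_{b\in\{0,\lambda^{(2)}\}}\inf_{a\in\{0,\lambda^{(1)}\}}f^{a,b}(s,y),$$
and, evaluated along $y=\overline{Q}_s^{\boldsymbol{\lambda}}$, the inner infimum is attained at $a_s^*=\lambda^{(1)}\mathbbm{1}_{\{\overline{Q}_s^{\boldsymbol{\lambda}}\geq\overline{U}_s\}}$ and the inner supremum at $b_s^*=\lambda^{(2)}\mathbbm{1}_{\{\overline{Q}_s^{\boldsymbol{\lambda}}\leq\overline{L}_s\}}$ (the ties $\overline{Q}_s^{\boldsymbol{\lambda}}=\overline{U}_s$, $\overline{Q}_s^{\boldsymbol{\lambda}}=\overline{L}_s$ are immaterial, both choices realising the optimum there). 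These processes are $\mathbbm{F}$-adapted and $\{0,\lambda^{(i)}\}$-valued, hence admissible, and they satisfy, for every admissible $a,b$ and a.e.\ $s\leq T$,
$$f^{a^*,b}(s,\overline{Q}_s^{\boldsymbol{\lambda}})\leq F(s,\overline{Q}_s^{\boldsymbol{\lambda}})\leq f^{a,b^*}(s,\overline{Q}_s^{\boldsymbol{\lambda}}).$$

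The bounds on the game value then follow from the comparison theorem for BSDEs with common terminal time $T$ and terminal value $\overline{\xi}$. Comparing $\overline{Q}^{\boldsymbol{\lambda}}$ (driver $F$) with $Y^{a^*,b}$ (driver $f^{a^*,b}$, which is Lipschitz), the left inequality above — available along the path of $\overline{Q}^{\boldsymbol{\lambda}}$ — yields $\overline{Q}_t^{\boldsymbol{\lambda}}\geq Y_t^{a^*,b}$ for all $t$, hence $\mathbbm{E}[J(a^*,b)]\leq\overline{Q}_0^{\boldsymbol{\lambda}}$ for every $b$, so $\overline{v}^{\boldsymbol{\lambda},SDG}\leq\sup_b g^{-1}(Y_0^{a^*,b})\leq g^{-1}(\overline{Q}_0^{\boldsymbol{\lambda}})$. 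Symmetrically, comparing $Y^{a,b^*}$ (driver $f^{a,b^*}$) with $\overline{Q}^{\boldsymbol{\lambda}}$ (driver $F$, which is Lipschitz), the right inequality yields $Y_t^{a,b^*}\geq\overline{Q}_t^{\boldsymbol{\lambda}}$, hence $\mathbbm{E}[J(a,b^*)]\geq\overline{Q}_0^{\boldsymbol{\lambda}}$ for every $a$, so $\underline{v}^{\boldsymbol{\lambda},SDG}\geq\inf_a g^{-1}(Y_0^{a,b^*})\geq g^{-1}(\overline{Q}_0^{\boldsymbol{\lambda}})$. As always $\underline{v}^{\boldsymbol{\lambda},SDG}\leq\overline{v}^{\boldsymbol{\lambda},SDG}$, all three coincide and equal $g^{-1}(\overline{Q}_0^{\boldsymbol{\lambda}})=v^{\boldsymbol{\lambda}}$, which is (\ref{prop_connection_SDG_eq01}); specialising $b=b^*$ in the first comparison and $a=a^*$ in the second gives $Y_0^{a^*,b^*}=\overline{Q}_0^{\boldsymbol{\lambda}}$, so $(a^*,b^*)$ in (\ref{prop_connection_SDG_eq02}) is an optimal pair of controls.

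The steps I expect to require care are purely technical: getting the orientation of the comparison theorem right (the driver that must be Lipschitz is the one \emph{not} evaluated along its own solution, and the pointwise driver inequality holds only along $\overline{Q}^{\boldsymbol{\lambda}}$, not for all $y$), and checking the integrability of the linear BSDEs for $Y^{a,b}$ in the unbounded-horizon case so that comparison applies in the $e^{-rt}$-weighted spaces of Proposition \ref{prop_bsde_solvability} — both routine given Assumption \ref{assumption_1} and the fact that $a_t+b_t+r\in[r,\lambda^{(1)}+\lambda^{(2)}+r]$.
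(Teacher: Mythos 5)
Your proposal is correct and follows essentially the same route as the paper's proof: linearize the payoff as the initial value of a linear BSDE, observe that the driver of BSDE (\ref{final_game_value}) is the min--max (Isaacs) envelope of the linear drivers with pointwise optimizers $a^*,b^*$, and conclude via the BSDE comparison theorem together with $\overline{v}^{\boldsymbol{\lambda},SDG}\geq\underline{v}^{\boldsymbol{\lambda},SDG}$. The only cosmetic difference is that you write the solution $Y^{a,b}$ explicitly as a conditional expectation, whereas the paper obtains the identification $\mathbbm{E}[J(a,b)]=V^{\boldsymbol{\lambda}}_0(a,b)$ by repeating the argument of Lemma \ref{firstLemma}.
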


\begin{proof}
Following the similar arguments to the proof of Lemma \ref{firstLemma}, it can be shown that, for any pair of controls $(a,b)\in \mathcal{A}(\lambda^{(1)}) \times \mathcal{B}(\lambda^{(2)})$, $\mathbbm{E}[J(a,b)]=V^{\boldsymbol{\lambda}}_0(a,b)$, where the latter is the first component of the unique solution to the following BSDE with a random terminal time $T$:
\[V^{\boldsymbol{\lambda}}_{t\wedge T}(a,b) =\overline{\xi} + \int_{t\wedge T}^T \bigg[a_u(\overline{U}_u-V^{\boldsymbol{\lambda}}_u(a,b))+b_u(\overline{L}_u-V^{\boldsymbol{\lambda}}_u(a,b))-rV^{\boldsymbol{\lambda}}_u(a,b)\bigg]\,du-\int_{t\wedge T}^TZ^{\boldsymbol{\lambda}}_u(a,b)\,dW_u,\]
for $t \geq 0$. On the other hand, recall that $\overline{Q}^{\boldsymbol{\lambda}}$ is the first component of the solution to BSDE (\ref{final_game_value}):
\[\overline{Q}_{t\wedge T}^{\boldsymbol{\lambda}}=\overline{\xi}+ \int_{t\wedge T}^T\left[-\lambda^{(1)} \left(\overline{Q}_u^{\boldsymbol{\lambda}}-\overline{U}_u\right)^+ + \lambda^{(2)}\left(\overline{L}_u-\overline{Q}_u^{\boldsymbol{\lambda}} \right)^+-r\overline{Q}_{u}^{\boldsymbol{\lambda}}\right]\,du - \int_{t\wedge T}^T\overline{Z}_u^{\boldsymbol{\lambda}}\,dW_u,\]
for $t \geq 0$. By letting $b_t^*=\lambda^{(2)}\mathbbm{1}_{\{\overline{Q}_{t}^{\boldsymbol{\lambda}}\leq \overline{L}_t\}}$, we obtain the inequality
\[-\lambda^{(1)} (\overline{Q}_u^{\boldsymbol{\lambda}}-\overline{U}_u)^+ + \lambda^{(2)}(\overline{L}_u-\overline{Q}_u^{\boldsymbol{\lambda}})^+-r\overline{Q}_{u}^{\boldsymbol{\lambda}}\leq  a_u (\overline{U}_u-\overline{Q}_u^{\boldsymbol{\lambda}})+b^*_u(\overline{L}_u-\overline{Q}_u^{\boldsymbol{\lambda}})-r\overline{Q}_{u}^{\boldsymbol{\lambda}}\]
holds for any control $a\in \mathcal{A}(\lambda^{(1)})$, and thus, the BSDE comparison result (see Corollary 4.4.2 in \cite{darling1997backwards}) yields that
\begin{equation}
\label{connection_SDG_eq01}
\overline{Q}^{\boldsymbol{\lambda}}_{t \wedge T} \leq V^{\boldsymbol{\lambda}}_{t \wedge T}(a,b^*),
\end{equation}
for $t \geq 0$ and any control $a\in \mathcal{A}(\lambda^{(1)})$. Similarly, by letting $a_t^*=\lambda^{(1)}\mathbbm{1}_{\{\overline{Q}_{t}^{\boldsymbol{\lambda}} \geq \overline{U}_t\}}$, we obtain
\begin{equation}
\label{connection_SDG_eq02}
\overline{Q}^{\boldsymbol{\lambda}}_{t \wedge T} \geq V^{\boldsymbol{\lambda}}_{t \wedge T}(a^*,b),
\end{equation}
for $t \geq 0$ and any control $b\in \mathcal{B}(\lambda^{(2)})$, and by letting $a_t^*=\lambda^{(1)}\mathbbm{1}_{\{\overline{Q}_{t}^{\boldsymbol{\lambda}} \geq \overline{U}_t\}}$ and $b_t^*=\lambda^{(2)}\mathbbm{1}_{\{\overline{Q}_{t}^{\boldsymbol{\lambda}} \leq \overline{L}_t\}}$, we obtain the equality
\begin{equation}
\label{connection_SDG_eq03}
\overline{Q}^{\boldsymbol{\lambda}}_{t \wedge T} = V^{\boldsymbol{\lambda}}_{t \wedge T}(a^*,b^*).
\end{equation}
It follows from (\ref{connection_SDG_eq01}) that
\begin{eqnarray*}
g^{-1}\left(\overline{Q}^{\boldsymbol{\lambda}}_0\right) \leq \inf_{a\in \mathcal{A}(\lambda^{(1)})}g^{-1}\left(V^{\boldsymbol{\lambda}}_0(a,b^*)\right) &=& \inf_{a\in \mathcal{A}(\lambda^{(1)})}g^{-1}\left(\mathbbm{E}[J(a,b^*)]\right) \\
&\leq& \sup_{b\in\mathcal{B}(\lambda^{(2)})}\inf_{a\in \mathcal{A}(\lambda^{(1)})}g^{-1}\left(\mathbbm{E}[J(a,b)]\right) = \underline{v}^{\boldsymbol{\lambda},SDG}.
\end{eqnarray*}
Likewise, (\ref{connection_SDG_eq02}) yields that $g^{-1}\left(\overline{Q}^{\boldsymbol{\lambda}}_0\right) \geq \overline{v}^{\boldsymbol{\lambda},SDG}$. Hence, it follows from $\overline{v}^{\boldsymbol{\lambda},SDG} \geq \underline{v}^{\boldsymbol{\lambda},SDG}$ that (\ref{prop_connection_SDG_eq01}) holds. As a direct consequence of (\ref{connection_SDG_eq01})-(\ref{connection_SDG_eq03}), we can obtain $(a^*,b^*)$ in (\ref{prop_connection_SDG_eq02}) is an optimal pair of controls.
\end{proof}

\section{Examples}
\subsection{Example I: Constrained risk-neutral Dynkin games}
\label{section_risk_neutral} As the first example, we take the
risk-sensitive function to be $g(x)=x$. This means both players are
risk neutral and, therefore, the corresponding games are called
\emph{constrained risk-neutral Dynkin games}. In this case, the cost
functional in (\ref{risk_sensitive_cost_functional}) is evaluated
under the linear expectation $\mathbbm{E}$:
\[\tilde{\mathbbm{E}}\left[R(\sigma,\tau)\right] = \mathbbm{E}\left[R(\sigma,\tau)\right]\]
with the payoff functional $R(\sigma,\tau)$ given by
(\ref{def_payoff}). Hence, the upper and lower values of the
constrained risk-neutral Dynkin game are defined as
\begin{equation}
\label{upper_lowerValues_risk_neutral}
\overline{v}^{\boldsymbol{\lambda},RN}=\inf_{\sigma\in\mathcal{R}_1^{(1)}}\sup_{\tau\in\mathcal{R}_1^{(2)}}\mathbbm{E}[R(\sigma,\tau)],\mbox{ and }\underline{v}^{\boldsymbol{\lambda},RN}=\sup_{\tau\in\mathcal{R}_1^{(2)}}\inf_{\sigma\in\mathcal{R}_1^{(1)}}\mathbbm{E}[R(\sigma,\tau)].
\end{equation}
The game (\ref{upper_lowerValues_risk_neutral}) is said to have value $v^{\boldsymbol{\lambda},RN}$ if $v^{\boldsymbol{\lambda},RN}=\overline{v}^{\boldsymbol{\lambda},RN}=\underline{v}^{\boldsymbol{\lambda},RN}$, and $(\sigma^{*,RN},\tau^{*,RN})\in \mathcal{R}_1^{(1)}\times \mathcal{R}_1^{(2)}$ is called an optimal stopping strategy of the game if
\[\mathbbm{E}\left[R(\sigma^{*,RN},\tau)\right] \leq \mathbbm{E}\left[R(\sigma^{*,RN},\tau^{*,RN})\right] \leq \mathbbm{E}\left[R(\sigma,\tau^{*,RN})\right]\]
for every $(\sigma,\tau)\in \mathcal{R}_1^{(1)}\times \mathcal{R}_1^{(2)}$.

Recall
\[Q_t^{\boldsymbol{\lambda}} =
\overline{Q}^{\boldsymbol{\lambda}}_t-\int_0^{t\wedge T} e^{-r(u-t\wedge T)}f_u\,du\]
in (\ref{value_process}), where $(\overline{Q}^{\boldsymbol{\lambda}},\overline{Z}^{\boldsymbol{\lambda}})$ is the unique solution to the characterizing BSDE (\ref{final_game_value}). Thus, we deduce the so-called penalized BSDE with
double obstacles on a random horizon $[0,T]$ (see \cite{cvitanic1996backward} for the case of a
fixed terminal time $T$),
\begin{equation}\label{final_game_value_risk_neutral}
Q_{t\wedge T}^{\boldsymbol{\lambda}}={\xi}+\int_{t\wedge
T}^T\left[f_s-\lambda^{(1)}
\left(Q_s^{\boldsymbol{\lambda}}-{U}_s\right)^+ +
\lambda^{(2)}\left({L}_s-Q_s^{\boldsymbol{\lambda}}
\right)^+-rQ^{\boldsymbol{\lambda}}_s\right]\,ds-\int_{t\wedge
T}^T\overline{Z}_s^{\boldsymbol{\lambda}}\,dW_s,
\end{equation}
and $Q_{t}^{\boldsymbol{\lambda}}=\overline{\xi}-\int_0^{T} e^{-r(u-T)}f_u\,du=\xi$ for $t\geq T$.
\begin{assumption}
\label{assumption_2} The risk-sensitive function $g(x)=x$. Moreover,
(i) when $T$ is an unbounded stopping time, $f$, $L$, $U$ and $\xi$ are all bounded; (ii) when $T$ is a bounded stopping time, $f\in \mathbbm{H}^2_1$, $L\in\mathbbm{S}^2_1$, $U\in\mathbbm{S}^2_1$ and $\xi\in\mathbbm{L}^2_1$.
\end{assumption}

Note that the above assumption implies Assumption \ref{assumption_1}
and, therefore, it follows from Theorem \ref{bigTheorem} that BSDE
$(\ref{final_game_value_risk_neutral})$ admits a unique solution
$(Q^{\boldsymbol{\lambda}},\overline{Z}^{\boldsymbol{\lambda}})$.
Moreover, the value of the constrained risk-neutral Dynkin game
(\ref{upper_lowerValues_risk_neutral}) exists and is given by
\[v^{\boldsymbol{\lambda},RN}=\overline{v}^{\boldsymbol{\lambda},RN}=\underline{v}^{\boldsymbol{\lambda},RN}=Q_0^{\boldsymbol{\lambda}}.\] The optimal stopping strategy is given by
\begin{equation*}
\left\{\begin{array}{l}
\sigma^{*,RN}=\inf\{T_N^{(1)}\geq T^{(1)}_1:Q_{T_N^{(1)}}^{\boldsymbol{\lambda}}\geq U_{T_N^{(1)}}\}\wedge T^{(1)}_{M_1};\\
\tau^{*,RN}=\inf\{T_N^{(2)}\geq
T^{(2)}_1:Q_{T_N^{(2)}}^{\boldsymbol{\lambda}}\leq
L_{T_N^{(2)}}\}\wedge T^{(2)}_{M_2}.
\end{array}\right.
\end{equation*}

\begin{remark} The special case $g(x)=x$ generalizes the
results obtained in \cite{liang2015stochastic} and
\cite{liang2018dynkin}. To be more specific, when $\lambda^{(1)}=0$
(resp. $\lambda^{(2)}=0$), Player I (resp. II) is with a zero intensity
control set and is never allowed to stop, so the value of the
constrained risk-neutral Dynkin game
(\ref{upper_lowerValues_risk_neutral}) equals to the value of the
one-player optimal stopping problem with Poisson intervention times
introduced in \cite{liang2015stochastic}. On the other hand, when
the two intensities coincide, i.e. $\lambda^{(1)}=\lambda^{(2)}$,
the value of the constrained risk-neutral Dynkin game
(\ref{upper_lowerValues_risk_neutral}) equals to the value of the
Dynkin game with Poisson intervention times introduced in
\cite{liang2018dynkin}.
\end{remark}

\subsection{Example II: Constrained Dynkin games with exponential utility}\label{section:exponential}

The second example for the risk-sensitive function $g$ is an
exponential utility: $g(x)=-e^{-\gamma x}$ for $\gamma>0$. In this case, the cost
functional in (\ref{risk_sensitive_cost_functional}) becomes
\[\tilde{\mathbbm{E}}\left[R(\sigma,\tau)\right]
=-\frac{1}{\gamma}\ln
\mathbbm{E}\left[\exp(-\gamma R(\sigma,\tau))\right]\] with the payoff
functional $R(\sigma,\tau)$ given by (\ref{def_payoff}). Hence, the
upper and lower values of the constrained risk-sensitive Dynkin game
are defined as
\begin{eqnarray}
\overline{v}^{\boldsymbol{\lambda},EU} &=& \inf_{\sigma\in\mathcal{R}_1^{(1)}}\sup_{\tau\in\mathcal{R}_1^{(2)}}-\frac{1}{\gamma}\ln\mathbbm{E}[\exp(-\gamma R(\sigma,\tau))],\label{upper_Values_exponential_utility}\\
\underline{v}^{\boldsymbol{\lambda},EU} &=& \sup_{\tau\in\mathcal{R}_1^{(2)}}\inf_{\sigma\in\mathcal{R}_1^{(1)}}-\frac{1}{\gamma}\ln\mathbbm{E}[\exp(-\gamma R(\sigma,\tau))].\label{lower_Values_exponential_utility}
\end{eqnarray}
The game (\ref{upper_Values_exponential_utility})-(\ref{lower_Values_exponential_utility}) is said to
have value $v^{\boldsymbol{\lambda},EU}$ if
$v^{\boldsymbol{\lambda},EU}=\overline{v}^{\boldsymbol{\lambda},EU}=\underline{v}^{\boldsymbol{\lambda},EU}$,
and $(\sigma^{*,EU},\tau^{*,EU})\in \mathcal{R}_1^{(1)}\times
\mathcal{R}_1^{(2)}$ is called an optimal stopping strategy of the
game if
\[\tilde{\mathbbm{E}}\left[R(\sigma^{*,EU},\tau)\right] \leq \tilde{\mathbbm{E}}\left[R(\sigma^{*,EU},\tau^{*,EU})\right] \leq \tilde{\mathbbm{E}}\left[R(\sigma,\tau^{*,EU})\right]\]
for every $(\sigma,\tau)\in
\mathcal{R}_1^{(1)}\times \mathcal{R}_1^{(2)}$.

Recall
\[Q_t^{\boldsymbol{\lambda}} =-\frac{1}{\gamma}e^{r(t\wedge T)}\ln(-e^{-r(t\wedge T)}\overline{Q}^{\boldsymbol{\lambda}}_t)-\int_0^{t \wedge T} e^{-r(u-t\wedge T)}f_u\,du\]
in (\ref{value_process}), where $(\overline{Q}^{\boldsymbol{\lambda}},\overline{Z}^{\boldsymbol{\lambda}})$ is the unique solution to the characterizing BSDE (\ref{final_game_value}). Thus, we deduce the following BSDE with quadratic growth on a random horizon $[0, T]$ (see \cite{Kobylanski} for the case of a fixed maturity $T$):
\begin{align}
Q_{t\wedge T}^{\boldsymbol{\lambda}}=&\ \xi+\int_{t \wedge T}^T \bigg[f_u-\frac{\lambda^{(1)}}{\gamma}e^{ru}(e^{\gamma (e^{-ru}Q^{\boldsymbol{\lambda}}_u-e^{-ru}U_u)}-1)^++\frac{\lambda^{(2)}}{\gamma}e^{ru}(1-e^{\gamma (e^{-ru}Q^{\boldsymbol{\lambda}}_u-e^{-ru}L_u)})^+\nonumber\\
&-rQ^{\boldsymbol{\lambda}}_u-\frac{\gamma}{2}e^{-ru}||Z^{\boldsymbol{\lambda}}_u||^2\bigg]\,du-\int_{t \wedge T}^TZ^{\boldsymbol{\lambda}}_u\,dW_u,\label{quadratic_BSDE}
\end{align}
for $t\geq 0$, where $Z^{\boldsymbol{\lambda}}_u=-e^{ru}\overline{Z}_u^{\boldsymbol{\lambda}}/(\gamma\overline{Q}_u^{\boldsymbol{\lambda}}), u\in[0,T]$. Note that, for $t\geq T$, $$Q_t^{\boldsymbol{\lambda}} =-\frac{1}{\gamma}e^{r T}\ln(-e^{-rT}\overline{\xi})-\int_0^{T} e^{-r(u-T)}f_u\,du=\xi.$$

\begin{assumption}
\label{assumption_3}
The risk-sensitive function $g(x)=-e^{-\gamma
x}$ for $\gamma>0$, and
$f$, $L$, $U$ and $\xi$ are all bounded.
\end{assumption}

Note that the above assumption implies Assumption \ref{assumption_1}
and, therefore, it follows from Theorem \ref{bigTheorem} that BSDE
(\ref{quadratic_BSDE}) admits a unique solution
$(Q^{\boldsymbol{\lambda}},Z^{\boldsymbol{\lambda}})$.
Moreover, the value of the constrained risk-sensitive Dynkin game
(\ref{upper_Values_exponential_utility})-(\ref{lower_Values_exponential_utility}) exists and is given by
\[v^{\boldsymbol{\lambda},EU}=\overline{v}^{\boldsymbol{\lambda},EU}=\underline{v}^{\boldsymbol{\lambda},EU}=Q^{\boldsymbol{\lambda}}_0.\] The optimal stopping strategy is given by
\begin{equation*}
\left\{\begin{array}{l}
\sigma^{*,EU}=\inf\{T_N^{(1)}\geq T^{(1)}_1:Q_{T_N^{(1)}}^{\boldsymbol{\lambda}}\geq U_{T_N^{(1)}}\}\wedge T^{(1)}_{M_1};\\
\tau^{*,EU}=\inf\{T_N^{(2)}\geq
T^{(2)}_1:Q_{T_N^{(2)}}^{\boldsymbol{\lambda}}\leq L_{T_N^{(2)}}\}\wedge T^{(2)}_{M_2}.
\end{array}\right.
\end{equation*}

\section{Conclusions}

In this paper, we have solved a new class of Dynkin games with a
general risk-sensitive criterion function $g$ and two heterogenous
Poisson arrival times as the permitted stopping time strategies for the two
players. Moreover, we have made a connection with a class of
stochastic differential games via the so-called randomized stopping
technique.

The approach and the results herein may be extended in various directions. First,
one may consider stochastic intensity models, an undoubtedly important
case since the two players' signal times may affect each other's intensities. For example, for $i\in\{1,2\}$, if the player $i$'s first signal time $T^{(i)}_1$ occurs, it will have {an impact (either positive or negative)} on the other player $(3-i)$'s intensity:
$$\lambda^{(1)}_t=\lambda^{(1)}+\overline{\lambda}^{(1)}\mathbbm{1}_{\{T^{(2)}_1\leq t\}},\quad
\lambda^{(2)}_t =\lambda^{(2)}+\overline{\lambda}^{(2)}\mathbbm{1}_{\{T^{(1)}_1\leq t\}},$$
{for some constants $\lambda^{(i)}, \overline{\lambda}^{(i)}$ such that the process $(\lambda^{(i)}_t)_{t \geq 0}$ is always nonnegative.} However, various nontrivial technical difficulties
arise. In particular, the resulting characterizing BSDEs will become a family of recursive equations, whose solvability is far from clear yet.

Second, one may consider that the two players have different attitudes towards risks and are associated with different information sets.
For example, one player is risk-neutral with $g^{(1)}(x)=x$ and the other has an exponential utility with $g^{(2)}(x)=-e^{-\gamma x}$. This leads to heterogenous payoff functionals
and, therefore a nonzero-sum constrained Dynkin game arises. The corresponding characterizing equations will become a BSDE system. Both extensions will be left for the future research.

\small

\end{document}